\pgfplotsset{compat=1.18}
\def\XXint#1#2#3{{\setbox0=\hbox{$#1{#2#3}{\int}$ }
		\vcenter{\hbox{$#2#3$ }}\kern-.6\wd0}}
\newtheorem{theorem}{Theorem}[section]
\newtheorem{lem}{Lemma}[section]
\newtheorem{prop}{Proposition}[section]
\newtheorem{cor}{Corollary}[section]
\newtheorem{remark}{Remark}[section]
\theoremstyle{definition}
\newtheorem{defn}{Definition}[section]
\newcommand{\eps}{\varepsilon}
\definecolor{ForestGreen}{RGB}{34,139,34}
\numberwithin{equation}{section}
\author{Tristan C. Collins}
\email{\href{mailto:tristanc@math.toronto.edu}{tristanc@math.toronto.edu}}
\address{Department of Mathematics, University of Toronto, 40 St. George Street, Toronto, ON, Canada}
\author{Benjy Firester}
\email{\href{mailto:benjyfir@mit.edu}{benjyfir@mit.edu}}
\address{Department of Mathematics, Massachusetts Institute of Technology, 77 Massachusetts Ave., Cambridge, MA, USA}
\author{Freid Tong}
\email{\href{mailto:freid.tong@utoronto.ca}{freid.tong@utoronto.ca}}
\address{Department of Mathematics, University of Toronto, 40 St. George Street, Toronto, ON, Canada}
\date{\today}
\newcommand{\p}{\partial}
\newcommand{\pt}{\partial_t}
\newcommand{\la}{\langle}
\newcommand{\rg}{\rangle}
\newcommand{\mr}[1]{{\rm #1}}
\newcommand{\cC}{\mathcal{C}}
\newcommand{\cE}{\mathcal{E}}
\newcommand{\cU}{\mathcal{U}}
\newcommand{\bA}{\mathbb{A}}
\newcommand{\bC}{\mathbb{C}}
\newcommand{\bR}{\mathbb{R}}
\newcommand{\bS}{\mathbb{S}}
\newcommand{\ol}{\overline}
\title[Homogeneous optimal transport]{Homogeneous optimal transport maps between oblique cones}
\begin{document}

\begin{abstract}
We construct homogeneous optimal transport maps for the quadratic cost between convex cones with homogeneous, possibly degenerate, densities when the cones satisfy an obliqueness condition. 
The existence of such maps plays a central role in the boundary regularity theory for optimal transport maps between convex domains. 
Our results are also relevant for the existence of complete Calabi-Yau metrics on certain quasi-projective varieties.
 
\end{abstract}
\maketitle
\vspace{-0.8cm}
\section{Introduction}
The existence, uniqueness, and regularity of optimal transport maps is an important problem with applications to differential geometry, mathematical physics, economics, probability, and computer science \cite{DPF,Evans,Villani,Villani2}. 
Seminal works of Brenier \cite{Brenier} and Gangbo-McCann \cite{Gangbo-McCann} showed the existence and uniqueness of optimal transport equations with quadratic cost as given by the gradients of solutions to an associated Monge-Amp\`ere equation. 
Caffarelli \cite{Caffarelli, Caffarelli2,Caffarelli3, Caffarelli4, Caffarelli5} developed a regularity theory for solutions of the Monge-Amp\`ere equations arising from optimal transport with quadratic cost.
Despite many advances, a complete theory of the boundary regularity remains an important open problem.

An important advance in the boundary regularity theory was made by Chen–Liu–Wang \cite{Chen-Liu-Wang}, which proved global $C^{2,\alpha}$ regularity for the Monge-Amp\`ere equation for $C^{1,1}$ convex (but not necessarily strictly convex) domains under mild assumptions on the densities. 
Recent progress building on Caffarelli’s program, and inspired by geometric regularity theory, has advanced the boundary regularity theory.
In \cite{Collins-Tong}, the first and third authors recently showed a new monotonicity formula that is constant along homogeneous optimal transport maps between cones, illustrating these as the appropriate notion of \textit{tangent cones} for the theory. 
As emphasized in \cite{Collins-Tong}, the existence and regularity of homogeneous optimal transport maps between cones are intimately related to the sharp boundary regularity for optimal transport maps.
Exploiting the monotonicity formula, \cite{Collins-Tong} established global $C^{1,1-\eps}$ regularity for optimal maps on convex domains, and global $C^{2,\alpha}$ regularity on $C^{1,\alpha}$ bounded convex domains, under mild assumptions on the densities.

The existence, uniqueness, and regularity of optimal transport maps between convex cones are central to the boundary regularity of optimal transport.  Optimal transport maps between half-spaces with possibly degenerate densities were analyzed by  Jhaveri-Savin \cite{Jhaveri-Savin}, who established an important Liouville theorem.
The combined works of Collins-Tong-Yau \cite{TristanFreidYau} and Collins-Firester \cite{TristanBenjy} constructed homogeneous optimal maps from a strict, convex cone to a half-space with possibly degenerate densities by reducing the optimal transport to a free boundary Monge-Amp\`ere equation.
As shown in \cite{TristanBenjy}, the associated class of free boundary Monge-Amp\`ere equations have interesting connections to a variety of problems in geometry and analysis. 

The goal of this paper is to establish the existence of homogeneous optimal transport maps between a general class of convex cones. 
To illustrate the relevance to the regularity theory consider an optimal transport map between polyhedral domains equipped with the Lebesgue measure. 
The boundaries of the polyhedra can be stratified based on the maximal dimension of an intersection with a supporting hyperplane.
In the ``generic case", after blowing up one expects to see a global optimal transport map between two half-spaces; such maps were analyzed by Jhaveri-Savin \cite{Jhaveri-Savin}.
For lower dimensional strata, the regularity of blow-up limits is dictated by the regularity of homogeneous solutions of the following optimal transport problem:
\[
\begin{cases}
    \det D^2\varphi &=\frac{g(x)}{g'(\nabla \varphi)}  \text{ in }\mathtt{C}\\
    \nabla \varphi(\mathtt{C}) &= \mathtt{C}'
\end{cases}
\]
where $(\mathtt{C},\mathtt{C}')$ are convex cones, and $(g(x), g'(y))$ are positive homogeneous functions on $(\mathtt C, \mathtt C')$. 
In this paper we establish the existence of such maps when the cones $\mathtt{C},\mathtt{C}'$ satisfy a strong obliqueness property; see Definition~\ref{def:Obliqueness} below.
When the source $\mathtt{C}$ splits non-trivial lines, solutions can be obtained from our result by taking products.
Our results also apply for a fairly general class of homogeneous measures.
As found in \cite{TristanBenjy}, when the target is not a strict cone, there is an apparent obstruction to the existence related to the relative degrees of degeneracy of $(g, g')$

Our results are also relevant to the existence of complete Calabi-Yau metrics on quasi-projective varieties, as initiated by Yau \cite{Yau} and Tian-Yau \cite{Tian-Yau, Tian-Yau2}.
Suppose $X$ is a Fano manifold, $\dim_{\bC}X=n$, $k<n$, and $D= D_1+\cdots +D_k$ is an anti-canonical divisor with simple normal crossings.
Suppose additionally that each divisor $D_i$ is ample.
As explained in \cite{Collins-Li}, homogeneous solutions of optimal transport problems between convex cones with degenerate densities describe the generic asymptotics (those are, tangent cones at infinity) of (putative) complete Calabi-Yau metrics.
The existence of such metrics when $k=1$ was established in foundational work of Tian-Yau \cite{Tian-Yau}, building on seminal work of Yau \cite{Yau}.
Collins-Li \cite{Collins-Li} constructed complete Calabi-Yau metrics for $k=2$ divisors satisfying the proportionality condition $D_1 \equiv_{\mathbb{Q}} D_2$ by producing an explicit, homogeneous optimal transport map from the positive orthant in $\mathbb{R}^2$ to a half-space.
In higher dimensions, the existence of homogeneous optimal transport maps between the orthant and a half-space was established in \cite{TristanFreidYau, TristanBenjy}. 
The main results of the present paper are applicable to the case when the $D_i$ are non-proportional ample divisors; see \cite[Section 2.4]{Collins-Li}. 

We now briefly outline the results obtained in this paper. 
Let $(P, \Sigma)\subset (\mathbb R^n_y, \mathbb R^n_x)$ be two convex bodies containing the origin which live in dual vector spaces, which we denote $\mathbb R^n_y$ and $\mathbb R^n_x$.
We consider the cones $(\mathtt{C}(P), \mathtt{C}(\Sigma))\subset (\mathbb R^{n+1}_y, \mathbb R^{n+1}_x)$ given by
\[
\begin{aligned}
\mathtt{C}(P)&:= \{(ty, t): y\in P, t\geq 0\}\subset \mathbb R^{n+1}_y,\\
\mathtt{C}(\Sigma) &:= \{(tx, t): x \in \Sigma, t\geq 0\} \subset \mathbb{R}^{n+1}_x.
\end{aligned}  
\]
Each cone has a dual $\mathtt{C}^\vee$ defined in the dual space as $\mathtt{C}^\vee = \{y : \la x, y \rg > 0$ for all $x \in \mathtt{C}\}$.
We equip the cones $(\mathtt{C}(P), \mathtt{C}(\Sigma))$ with densities  $(d\mu, d\nu) = (g_P(y)dy, g_\Sigma(x)dx)$ satisfying some structural assumptions; see Section~\ref{sec: prelim}.
The degrees of homogeneity of the measures will refer to the homogeneity of the functions $g_P$ and $g_\Sigma$. 

\begin{defn}\label{defn:ObliqueCone}
    We say the pair of strict cones $(\mathtt{C},\mathtt{C}')$ is \textit{strongly oblique} if
\[
     \ol{\mathtt{C}}\setminus\{0\} \subset \mr{int}((\mathtt{C}')^\vee),
\]
where $(\mathtt{C}')^\vee$ denotes the dual cone of $\mathtt{C}'$.
\end{defn}
This condition was first identified in the work of the first and third authors \cite{Collins-Tong}, where it was shown that if a pair of strongly oblique cones $(\mathtt{C}(P), \mathtt{C}(\Sigma))$ arises as affine tangent cones of an optimal transport map, then any blow-up will be a homogeneous optimal transport map between $\mathtt{C}(P)$ and $\mathtt{C}(\Sigma)$, which suggests that such homogeneous optimal transport maps should always exist.

Our first result shows that indeed under the strong obliqueness condition, homogeneous optimal transport maps always exist. 

\begin{theorem}\label{thm:StrongObliqueness}
    Suppose two strict cones $(\mathtt{C}, d\mu)$ and $(\mathtt{C}',d\nu)$ are strongly oblique with homogeneous, locally finite, doubling densities. 
    Then, there exists a homogeneous optimal transport map $\nabla \varphi$ between them for $\varphi \in C^{1,\epsilon}(\ol{\mathtt{C}(P)})$ for some $\epsilon>0$. 
\end{theorem}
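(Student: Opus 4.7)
My plan is to realize $\varphi$ as a locally uniform limit of optimal potentials for a sequence of regularized optimal transport problems with smooth strictly positive densities on smooth strictly convex cross-sections, and to use strong obliqueness to extract uniform $C^{1,\epsilon}$ estimates that pass to the limit. I would first parametrize the cones as $\mathtt{C}=\mathtt{C}(P)$ and $\mathtt{C}'=\mathtt{C}(\Sigma)$, and observe that the homogeneity degrees of $(d\mu,d\nu)$ together with the dimension $n+1$ uniquely single out a degree $k$ for which a $k$-homogeneous convex $\varphi$ is compatible with the Monge-Amp\`ere equation $\det D^2\varphi=g_P/g_\Sigma(\nabla\varphi)$. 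Such a $\varphi$ is fully determined by its restriction to the slice $P\cong\{t=1\}\cap\mathtt{C}(P)$, so the cone problem reduces to a Monge-Amp\`ere-type equation on $P$ with a second-boundary condition encoding $\nabla\varphi(\mathtt{C}(P))=\mathtt{C}(\Sigma)$.

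I would then approximate $(P,\Sigma,d\mu,d\nu)$ by smooth strictly convex bodies $(P_\delta,\Sigma_\delta)$ and smooth $k$-compatible homogeneous densities $(d\mu_\delta,d\nu_\delta)$ bounded above and below by positive constants on compact subsets, arranged so that the associated cones stay strongly oblique with a uniform modulus and so that the total masses balance. For the regularized problem Brenier and Gangbo--McCann produce a unique (up to constants) Kantorovich potential $\varphi_\delta$, which by uniqueness and the homogeneity of the data must itself be $k$-homogeneous. The principal obstacle, and the heart of the argument, is a uniform $C^{1,\epsilon}$ estimate for $\varphi_\delta$ on compact subsets of $\ol{\mathtt{C}(P)}\setminus\{0\}$ independent of $\delta$. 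Strong obliqueness yields a quantitative lower bound $\la x,\nabla\varphi_\delta(x)\rg\geq c|x|\,|\nabla\varphi_\delta(x)|$ with $c>0$ uniform in $\delta$, ruling out tangential collapse of the transport map at $\partial\mathtt{C}$. Combined with the doubling and homogeneity of the densities, this forces uniform John-ellipsoid normalizations of the Monge-Amp\`ere sections of $\varphi_\delta$, and Caffarelli's $C^{1,\alpha}$ theory then upgrades these to a uniform $C^{1,\epsilon}$ estimate with $\epsilon>0$ depending only on the obliqueness modulus, the doubling constants, and the homogeneity degrees.

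Finally, the uniform $C^{1,\epsilon}$ bounds give equicontinuity of $\varphi_\delta$ and $\nabla\varphi_\delta$, so a subsequence converges to some $\varphi\in C^{1,\epsilon}(\ol{\mathtt{C}(P)})$ which is automatically convex and $k$-homogeneous. Weak continuity of Monge-Amp\`ere measures (Aleksandrov) recovers the equation with the original densities, and obliqueness again prevents the transport images from escaping $\mathtt{C}'$ in the limit, so that $\nabla\varphi(\mathtt{C})\subset\mathtt{C}'$, with equality following from mass balance. I expect the most delicate step to be the simultaneous boundary/apex non-degeneracy behind the uniform section estimate: the homogeneous scaling couples the geometry at the origin with that along $\partial\mathtt{C}(P)$, and strong obliqueness is precisely the hypothesis that lets both scales be controlled uniformly, making it plausible that this single geometric assumption suffices for both the a priori estimate and the preservation of the transport condition under the limit.
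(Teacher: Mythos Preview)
Your approach differs from the paper's: rather than regularize and pass to a limit, the paper works directly on the link via the ansatz $\varphi=(y_{n+1}v(\tilde y))^{1+\gamma}$, defines an energy $\cE(v)=-\log I(u)+J(v)^{-1/(n+1+\alpha)}$ on positive convex functions on $P$, and uses strong obliqueness to show that the free boundary $\Omega=\{u+\phi_{\Sigma^\circ}<0\}$ is uniformly round (trapped between balls of radius comparable to $|u(0)|$). This yields $\cE(v)\gtrsim -\log v(0)+v(0)-1$, hence a lower bound and compact sublevel sets; a minimizer is then shown to satisfy the Kantorovich inequality for the measures appearing in the reduced equation, so by Gangbo--McCann it is an Alexandrov solution, and Caffarelli together with Jhaveri--Savin give $C^{1,\epsilon}$.

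Your scheme has a genuine gap at the very first step, the existence of $\varphi_\delta$. Brenier and Gangbo--McCann require finite total mass, but the homogeneous measures on $\mathtt{C}(P_\delta)$ and $\mathtt{C}(\Sigma_\delta)$ have infinite mass, so there is no off-the-shelf theorem producing a Kantorovich potential on the full cone. If you truncate the cones to finite pieces, the problem is no longer dilation-invariant and the argument ``uniqueness plus symmetry forces $k$-homogeneity'' collapses. If instead you pass to the reduced slice equation on $P_\delta$, that equation involves $v^\star$ both in the right-hand side and in the boundary condition and is \emph{not} the quadratic-cost transport between two densities on $P_\delta$ and $\Sigma_\delta$; Brenier does not furnish a solution to it. In each reading, producing a single $\varphi_\delta$ is essentially the content of the theorem you are trying to prove. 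A secondary issue is that your uniform $C^{1,\epsilon}$ estimate is asserted rather than derived: the bound $\la x,\nabla\varphi_\delta(x)\rg\ge c|x|\,|\nabla\varphi_\delta(x)|$ is just the statement that the image lies well inside the dual cone, which is automatic from the target constraint and strong obliqueness, and by itself neither normalizes sections nor precludes boundary degeneration. The paper sidesteps this entirely by extracting compactness from the energy lower bound rather than from a priori regularity.
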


We refer the reader to Section~\ref{sec: prelim} for the definition of locally finite, doubling densities. 

\begin{remark}
    Andreasson and Hultgren \cite{RolfJakob} have independently obtained a similar result to Theorem~\ref{thm:StrongObliqueness} using different techniques.
\end{remark}

Our second result considers the case when the target cone splits off $n-k$ factors of $\bR$. 
In this case, the appropriate notion of obliqueness is the following:
\begin{defn}\label{def:partialObliqueCones}
The pair $(\mathtt{C},\mathtt{C}' \times \bR^{n-k})$ is \textit{strongly partially oblique} if
\[
\ol{\mathtt{C}\cap W}\setminus \{0\}\subset \mr{relint}((\mathtt{C}')^\vee)
\]
where $W$ is the $(k+1)$-dimensional subspace containing $(\mathtt{C}')^\vee$, and $\mr{relint}(\mathtt{C}')^\vee$ is the relative interior of $(\mathtt{C}')^\vee$. 
\end{defn}

Under the assumption of strong partial obliqueness, we establish the existence of homogeneous optimal transport maps.

\begin{theorem}\label{thm:PartiallyStrongObliqueness}
    Consider two cones $(\mathtt{C}, d\mu)$ and $(\mathtt{C}' \times \bR^{n-k},d\nu)$ with $k<n$ that are strongly partially oblique, and $(d\mu,d \nu)$ are locally finite, doubling measures with homogeneous densities of respective degrees $(\alpha, \beta)$.
    Suppose that $\beta > \alpha$ and $d\mu \gtrsim d_{\p \mathtt{C}}^{1 + \alpha}dy$. 
    Then, there exists a homogeneous optimal transport map $\nabla \varphi$ between them for $\varphi \in C^{1,\epsilon}(\ol{\mathtt{C}(P)})$ for some $\epsilon>0$. 
\end{theorem}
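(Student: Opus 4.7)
The plan is to prove Theorem~\ref{thm:PartiallyStrongObliqueness} by an approximation argument, reducing to cases covered by Theorem~\ref{thm:StrongObliqueness} and then passing to the limit, invoking the hypotheses $\beta>\alpha$ and $d\mu\gtrsim d_{\partial\mathtt{C}}^{1+\alpha}dy$ to obtain the necessary uniform a priori estimates.

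The first step is to approximate the non-strict target $\mathtt{C}'\times\bR^{n-k}$ by strict cones $\mathtt{C}'_R$ satisfying $\mathtt{C}'_R\nearrow\mathtt{C}'\times\bR^{n-k}$ as $R\to\infty$. Fixing $e\in\mr{relint}((\mathtt{C}')^\vee)$, a natural choice is
\[
\mathtt{C}'_R := \{(x',x''):x'\in\mathtt{C}',\;|x''|\leq R\la e,x'\rg\},
\]
whose dual $(\mathtt{C}'_R)^\vee=\{(v',v''):v'-R|v''|e\in(\mathtt{C}')^\vee\}$ is strict and shrinks toward $(\mathtt{C}')^\vee\times\{0\}\subset W$. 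Since $\mathtt{C}$ may extend outside $W$, the pair $(\mathtt{C},\mathtt{C}'_R)$ is not strongly oblique for large $R$, so I would simultaneously restrict the source to strict cones $\mathtt{C}_R\subset\mathtt{C}$, cutting off the $y''$-direction in an $R$-dependent fashion so that $|y''|$ stays small relative to $\la f,y'\rg$ for a suitable $f\in W$, while ensuring $\mathtt{C}_R\nearrow\mathtt{C}$. Endow $(\mathtt{C}_R,\mathtt{C}'_R)$ with doubling, homogeneous densities approximating $(d\mu,d\nu)$ in a compatible way. Theorem~\ref{thm:StrongObliqueness} then produces homogeneous optimal transport maps $\nabla\varphi_R\in C^{1,\epsilon_R}(\ol{\mathtt{C}_R})$, of homogeneity degree $k_R$ converging to the expected degree $k:=\frac{2(n+1)+\alpha+\beta}{n+1+\beta}$.

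The decisive step is to establish uniform estimates on $\{\varphi_R\}$ as $R\to\infty$. The condition $\beta>\alpha$ forces $k<2$, so that $\nabla\varphi$ grows sublinearly and is consistent with the unbounded widening of $\mathtt{C}'\times\bR^{n-k}$ in the $\bR^{n-k}$-direction. The boundary density bound $d\mu\gtrsim d_{\partial\mathtt{C}}^{1+\alpha}dy$ guarantees that sufficient source mass accumulates near $\partial\mathtt{C}$ to be transported into the far reaches of the target as $R\to\infty$, preventing $\varphi_R$ from degenerating into a solution of the reduced lower-dimensional problem on $\mathtt{C}\cap W$. Combined with the scaling given by the homogeneity degree $k_R\to k$, these hypotheses should yield uniform $C^0$ growth estimates and uniform $C^{1,\epsilon}$ regularity.

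Finally, Arzelà--Ascoli extracts a convex limit $\varphi_\infty\in C^{1,\epsilon}(\ol{\mathtt{C}(P)})$, and stability of optimal transport plans under weak convergence of measures identifies $\nabla\varphi_\infty$ as the desired homogeneous optimal transport map between $(\mathtt{C},d\mu)$ and $(\mathtt{C}'\times\bR^{n-k},d\nu)$. The principal obstacle is the uniform estimate step: the competing degeneracies introduced by simultaneously shrinking the source and widening the target must be carefully balanced, and it is precisely the interplay of $\beta>\alpha$ with the boundary density bound $d\mu\gtrsim d_{\partial\mathtt{C}}^{1+\alpha}dy$ that makes the limit nontrivial and ensures it genuinely solves the transport problem, rather than collapsing to the lower-dimensional solution on $\mathtt{C}\cap W$.
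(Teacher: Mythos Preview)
Your approximation strategy is genuinely different from the paper's approach, but the uniform-estimate step contains a real gap rather than merely a detail to be filled in.

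The paper does \emph{not} approximate by strict cones. Instead it works directly with the variational functional $\mathcal{E}$ on the link $P$. The non-strictness of $\Sigma=\Sigma^k\times\bR^{n-k}$ makes $I$ invariant under translations in the $\bA'$ directions, so $\mathcal{E}$ is not coercive on $\mathcal{C}^+(P)$. The paper fixes this by restricting to the normalized class $\mathcal{C}'(P)=\{v:\int_P\langle y,x'\rangle h_P v^{-(n+2+\alpha)}\,dy=0\text{ for all }x'\in\bA'\}$. The hypothesis $d\mu\gtrsim d_{\partial\mathtt{C}}^{1+\alpha}dy$ is used precisely here (Lemma~\ref{lem:normalization}): it forces $J(v-\langle x',\cdot\rangle)\to\infty$ as $x'\to\partial\Omega'$, so every $v$ has a unique normalizing translate. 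The hypothesis $\beta>\alpha$ enters in Corollary~\ref{cor:ELowerBound} to rule out a specific degeneration of the energy. With these two ingredients, $\mathcal{E}$ is bounded below on $\mathcal{C}'(P)$ and a minimizer exists; one then shows (Proposition after Lemma~\ref{lem:boundaryNormalization}) that the minimizer over the constrained class still solves the optimal-transport inequality, using that $d\nu$ has barycenter zero in the $\bA'$ directions.

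In your scheme, the pair $(\mathtt{C}_R,\mathtt{C}'_R)$ is strongly oblique but with obliqueness constant tending to zero as $R\to\infty$: indeed $(\mathtt{C}'_R)^\vee$ collapses to the $(k{+}1)$-dimensional set $(\mathtt{C}')^\vee\times\{0\}$, so $\mr{int}((\mathtt{C}'_R)^\vee)$ shrinks, and to keep $\ol{\mathtt{C}_R}\setminus\{0\}\subset\mr{int}((\mathtt{C}'_R)^\vee)$ you must pinch $\mathtt{C}_R$ toward $\mathtt{C}\cap W$ at a comparable rate. Every constant in the proof of Theorem~\ref{thm:StrongObliqueness} (Lemmas~\ref{lem:RoundnessOfOmega}--\ref{lem:JEstimateStrict}) depends on $\inf_{\theta}\phi_{\mathcal{U}}(\theta)$, which goes to zero. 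So Theorem~\ref{thm:StrongObliqueness} gives you no uniform control on $\varphi_R$. Worse, the limiting problem has an $(n{-}k)$-parameter family of solutions related by $\bA'$-translation, and since each $\varphi_R$ is uniquely determined you cannot normalize it; nothing in your outline pins down where in this family the sequence accumulates, or prevents it from drifting to infinity. Your heuristic about $\beta>\alpha$ forcing sublinear gradient growth is correct but orthogonal to this issue, and your reading of the role of $d\mu\gtrsim d_{\partial\mathtt{C}}^{1+\alpha}dy$ does not match how it is actually used. (A minor point: with $\alpha,\beta$ fixed the homogeneity degree is constant, not a varying $k_R$.) To make an approximation argument work you would essentially have to reproduce the paper's normalization and energy estimates at each stage, at which point the approximation is superfluous.
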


We briefly outline the general strategy to solving this problem.
An optimal transport map $\nabla \varphi: \mathtt{C}(P) \to \mathtt{C}(\Sigma)$ is given by $\varphi$ solving the equation
\begin{equation}\label{eqn:OTbtwnCones}
\begin{cases}
 \det D^2 \varphi= \dfrac{g_P(y)}{g_\Sigma(\nabla \varphi)}
 &\text{in }\mathtt{C}(P), \\
\nabla\varphi(\mathtt{C}(P)) = \mathtt{C}(\Sigma).
\end{cases}
\end{equation}
The second condition can be equivalently stated as $\p_{n+1} \varphi = \phi_{\Sigma^\circ}(\p_1\varphi,\ldots, \p_n \varphi)$.
When $\varphi$ is homogeneous of degree $1 + \frac{n+1+\alpha}{n+1+\beta}$, the optimal transport equation reduces to the following Monge-Amp\`ere equation on the compact domain $P$, (see Lemma~\ref{lem: ansatzReduction}),
\begin{equation}\label{eqn:FreeBdyEqnP}
\begin{cases}
     \det D^2v = \dfrac{h_P(y)}{v^{n+2+\alpha}(-v^\star)^\beta h_\Sigma(\tfrac{\nabla v}{-v^\star})} & \text{in }P ,\\
    v^\star + \phi_{\Sigma^\circ}(\nabla v) = 0 & \text{on }\p P,
\end{cases}
\end{equation}
where $v^\star(y) = \la \nabla v(y),y\rg - v(y) = v^*(\nabla v)$ is pullback of the Legendre transform of $v$ by $\nabla v$. 
If we let $u = v^*$ be the Legendre transform, then the above equation can be expressed as the following free boundary problem
\begin{equation}\label{eqn:freeBdyEqnOmega}
\begin{cases}
    \det D^2 u = \dfrac{(u^\star(x))^{n+2+\alpha}(-u(x))^\beta h_{\Sigma}(\tfrac{x}{-u})}{h_P(\nabla u)} &\text{in }\Omega,\\
    u(x) + \phi_{\Sigma^\circ}(x) = 0&\text{on }\p \Omega ,\\
    \nabla u(\Omega) = P.
\end{cases}
\end{equation}
We construct an energy functional $\cE$ and show that, up to rescaling, $C^2$ solutions of~\eqref{eqn:freeBdyEqnOmega} are precisely critical points of $\cE$.
When $\Sigma$ is compact, as in Theorem~\ref{thm:StrongObliqueness}, the strong obliqueness property ensures a lower bound on the $\mathcal{E}$ functional.
On the other hand, when $\Sigma$ is not compact, as in Theorem~\ref{thm:PartiallyStrongObliqueness}, the group of translations preserving $\Sigma$ leads to non-compactness for $\mathcal{E}$.
As a result, we must normalize the functions $v$ by appropriate affine functions.  Similar arguments were used in the construction of homogeneous optimal transport maps to half-spaces \cite{TristanFreidYau, TristanBenjy}.  In comparison with the arguments in \cite{TristanFreidYau, TristanBenjy}, the main new idea introduced in the current paper is to exploit the geometry of strong obliqueness to establish suitable compactness results. New arguments are also required to deal with the general measures $h_P$ and $h_\Sigma$, resulting in a more complicated energy functional, and necessitating a new argument establishing that critical points of the energy functional satisfy the optimal transport equation.

An outline of the paper is as follows:  Section~\ref{sec: prelim} fixes notation and ideas and contains preliminary discussion of the variational problem. Section~\ref{sec:Strict} establishes energy estimates based on obliqueness and proves Theorem~\ref{thm:StrongObliqueness}. Section~\ref{sec:Non-strict} combines ideas from Section~\ref{sec:Strict}, and \cite{TristanFreidYau, TristanBenjy} to prove Theorem~\ref{thm:PartiallyStrongObliqueness}.

\smallskip
\noindent\textbf{Acknowledgments}: T.C.C.~is supported in part by NSERC Discovery grant RGPIN-2024-518857, and NSF CAREER grant DMS-1944952.
B.F.~is supported by a MathWorks fellowship.
F.T.~is supported in part by NSERC Discovery grant RGPIN-2025-06760.

\section{Preliminaries and variational framework}\label{sec: prelim}

We recall some definitions from convex geometry and establish some notation and terminology:
\begin{itemize}
    \item For a convex set $K\subset \mathbb R^n_y$, the support function $\phi_{K}:\mathbb R^n_x\to \mathbb R$ is defined by $ \phi_{K}(x) = \sup_{y\in K} \langle x,y \rangle $. 
    \item The polar dual $K^{\circ}\subset \mathbb R^{n}_x$ is a convex body in the dual space given by $K^{\circ} = \{x: \phi_K(x) \leq 1\}$. 
    \item The dual cone $\mathtt{C}^\vee$ is given by
    $\mathtt{C}^\vee = \{x : \la x, y \rg > 0$ for all $y \in \mathtt{C}\}$.
    \item For $v$ a convex function, $u = v^*$ is its Legendre dual given by $ u(x) = \sup_{y \in P}\la x ,y \rg - v(y)$.
    \item The convex indicator function $\mathbf{1}_K$ is
    \[
    \mathbf{1}_K(y) = \begin{cases} 0 & \text{ if } y \in K,\\
    +\infty & \text{ else}.
    \end{cases}
    \]
    It satisfies $\mathbf{1}_K = (\phi_K)^*$.
    \item We can express the cones over convex sets as
    \[
    \begin{split}
    \mathtt{C}(P) &= \{(\ol{y}, y_{n+1})\in \mathbb R^{n+1}_y: y_{n+1}\geq \phi_{P^{\circ}}(\ol{y})\} \text{ and}\\
    \mathtt{C}(\Sigma) &= \{(\ol{x}, x_{n+1})\in \mathbb R^{n+1}_x: x_{n+1}\geq \phi_{\Sigma^{\circ}}(\ol{x})\}.
    \end{split}
    \]
    \item For $\bR^{n+1}_x$ (resp.~$\bR^{n+1}_y$), we let $\ol{x}$ (resp.~$\ol{y}$) denote the first $n$ directions and $\tilde{x} = \tfrac{\ol{x}}{x_{n+1}}$ (resp.~$\tilde{y} = \tfrac{\ol{y}}{y_{n+1}}$) to be the rescaling to the {\em link of the cone} $\Sigma$ (resp.~$P$).  We will similarly denote $\ol{\nabla} u = (\p_1 u,\ldots, \p_n u)$ and $\tilde{\nabla}u = \tfrac{\ol{\nabla}u}{\p_{n+1}u}$. 
    \item We will use the notation $a \lesssim b$ to denote $a \leq Cb$ where $C$ is a uniform, estimable constant depending only on background data; $a\sim b$ will mean $a \lesssim b$ and $b \lesssim a$. 
    \item We say that a convex set $K$ is in John's position if its John ellipsoid is centered at the origin.
    Notably, any convex body in John's position is uniformly equivalent to $B_{|K|^{1/n}}(0)$ with comparison constants depending only on the dimension.
\end{itemize}

As a first step, we recast the strong obliqueness conditions  in Definitions~\ref{defn:ObliqueCone} and~\ref{def:partialObliqueCones} in terms of the links $\Sigma$ and $P$. 
\begin{defn}\label{def:Obliqueness}
    When $\Sigma$ is compact, we say that the pair $(P,\Sigma)$ is \textit{strongly oblique} if 
    \[
    \la \p \phi_{P}(0), \p \phi_{\Sigma}(0)\rg + 1 > 0 \iff -\overline{P} \subset \mr{int}(\Sigma^{\circ}) \iff  -\ol{\Sigma}\subset \mr{int}(P^\circ).
    \]
    When $\Sigma = \Sigma^k \times \bR^{n-k}$, we say the pair $(P,\Sigma)$ is \textit{strongly partially oblique} if
    \[
    -\ol{P \cap \bA''}\subset (\Sigma^k)^\circ = (\Sigma^k \times \bR^{n-k})^\circ \subset \bA''
    \]
    where $\bA''$ is the $k$-dimensional subspace containing the polar dual of $\Sigma^k$.
\end{defn}

\begin{lem}
    The strong obliqueness conditions on cones given in Definitions~\ref{defn:ObliqueCone} and~\ref{def:partialObliqueCones} are equivalent to the strong obliqueness conditions in Definition~\ref{def:Obliqueness} on the links.
\end{lem}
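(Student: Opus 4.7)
The plan is to unwind the cone definitions by computing the dual cone $\mathtt{C}(\Sigma)^\vee$ explicitly in terms of the support function of $\Sigma$, and then reducing to the link condition by restricting to a hyperplane slice.

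In the compact case, parameterizing $\mathtt{C}(\Sigma)$ as $\{(t\bar{x}, t) : \bar{x} \in \Sigma,\ t \geq 0\}$ and computing the dual pairing shows
\[
\mathtt{C}(\Sigma)^\vee = \{(\bar y, y_{n+1}) \in \bR^{n+1}_y : y_{n+1} \geq \phi_\Sigma(-\bar y)\},
\]
whose interior is the strict epigraph $\{y_{n+1} > \phi_\Sigma(-\bar y)\}$. Boundedness of $P$ makes the recession cone trivial, so the only point of $\ol{\mathtt{C}(P)}$ in the hyperplane $\{y_{n+1} = 0\}$ is the origin. The containment $\ol{\mathtt{C}(P)} \setminus \{0\} \subset \mr{int}(\mathtt{C}(\Sigma)^\vee)$ therefore reduces, by degree-one homogeneity and restriction to the slice $y_{n+1} = 1$, to $\phi_\Sigma(-y) < 1$ for every $y \in \ol P$, which is precisely $-\ol P \subset \mr{int}(\Sigma^\circ)$. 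The two remaining formulations in Definition~\ref{def:Obliqueness} follow immediately from the standard identity $\partial \phi_K(0) = K$: the angle condition $\langle \partial\phi_P(0), \partial\phi_\Sigma(0)\rangle + 1 > 0$ becomes $\sup_{y \in \ol P,\, x \in \ol\Sigma}(-\langle x, y\rangle) < 1$, and this symmetric double supremum coincides with both $\sup_{y \in \ol P}\phi_\Sigma(-y) < 1$ and $\sup_{x \in \ol\Sigma}\phi_P(-x) < 1$.

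For the partial case $\Sigma = \Sigma^k \times \bR^{n-k}$, the $\bR^{n-k}$ factor forces any $y \in \mathtt{C}(\Sigma)^\vee$ to vanish on its last $n-k$ components. Hence $\mathtt{C}(\Sigma)^\vee$ lives inside the $(k+1)$-dimensional subspace $W = \bA'' \times \bR$ and equals there the epigraph $\{y_{n+1} \geq \phi_{\Sigma^k}(-y^k)\}$, with relative interior $\{y_{n+1} > \phi_{\Sigma^k}(-y^k)\}$. Using the Minkowski-functional identity $\phi_{P^\circ}(y) = \inf\{t > 0 : y \in tP\}$, the intersection $\mathtt{C}(P) \cap W$ is identified with the cone $\mathtt{C}(P \cap \bA'')$ over the lower-dimensional slice $P \cap \bA''$. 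With these identifications, the compact-case argument runs verbatim inside $W$ with $\Sigma^k$ in place of $\Sigma$ and $P \cap \bA''$ in place of $P$, yielding the partial link condition (with interior understood relative to $\bA''$).

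The proof is essentially bookkeeping with support functions and polar duality, so I do not anticipate a real obstacle. The only part requiring a bit of care is the partial case: one must correctly identify the ambient subspace $W$, and verify that $(\Sigma^k \times \bR^{n-k})^\circ$ truly agrees with the $\bA''$-polar of $\Sigma^k$, so that the link condition inside $W$ matches the formulation in Definition~\ref{def:Obliqueness}.
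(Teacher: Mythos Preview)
Your proof is correct and follows essentially the same route as the paper: identify $\mathtt{C}(\Sigma)^\vee$ (equivalently, its interior) with the strict epigraph $\{y_{n+1} > \phi_\Sigma(-\bar y)\}$ and then restrict to the slice $y_{n+1}=1$ to recover the polar-dual condition on the links, handling the partial case by first passing to the subspace $W=\bA''\times\bR$ and noting $\mathtt{C}(P)\cap W=\mathtt{C}(P\cap\bA'')$. The paper's proof is more terse---it records the dual-cone computation as $\mathtt{C}(\Sigma)^\vee\setminus\{0\}=\mr{int}(\mathtt{C}(-\Sigma^\circ))$ and then ``rescales by $\tfrac{1}{st}$''---but the underlying argument is the same.
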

\begin{proof}
    We show that $\mathtt{C}(\Sigma)^\vee\setminus \{0\} = \mr{int}(\mathtt{C}(-\Sigma^\circ))$.
    Consider a point $(y, t) \in \mathtt{C}(\Sigma)^\vee \setminus \{0\}$ meaning that 
    \[
    \la y, x\rg + ts  > 0 \text{ for all }(x,s) \in \mathtt{C}(\Sigma)\setminus \{0\}.
    \]
    Suppose $\Sigma$ is compact.
    Rescaling by $\frac{1}{st}$ immediately implies $-\overline{P}\subset {\rm int}(\Sigma^{\circ})$.  
    Now suppose $\Sigma = \Sigma^{k}\times \mathbb{R}^{n-k}$, so $C(\Sigma)^{\vee} \subset \bA''= (\mathbb{R}^{n-k})^{\perp}$.  
    Rescaling by $\frac{1}{st}$, we obtain that strong partial obliqueness is equivalent to $-\overline{P\cap \bA''}\subset \Sigma^{\circ}$. 
\end{proof}
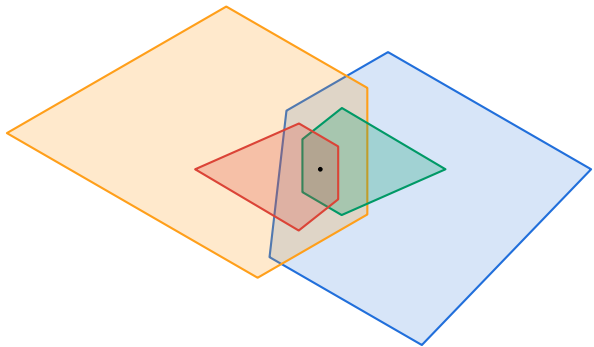
\begin{figure}
    \centering
    \begin{tikzpicture}[scale=1.5, line join=round, line cap=round]
          % ---- Colors (adjust if you like) ----
          \definecolor{sigcolor}{RGB}{33,111,219}     % Σ blue
          \definecolor{polcolor}{RGB}{255,159,26}     % Σ° orange
          \definecolor{pcolor}{RGB}{0,153,102}        % P Green
          \definecolor{mpcolor}{RGB}{219,68,55}       % -P Red
        
          % ---- Σ (from vertices, CCW) ----
          \filldraw[fill=sigcolor, fill opacity=0.18, draw=sigcolor, thick]
            (-0.450000,-0.779423) --
            ( 0.900000,-1.558846) --
            ( 2.400000, 0.000000) --
            ( 0.600000, 1.039230) --
            (-0.300000, 0.519615) -- cycle;

          % ---- Σ° (from vertices, CCW) ----
          \filldraw[fill=polcolor, fill opacity=0.22, draw=polcolor, thick]
            (-2.777778, 0.320750) --
            (-0.555556,-0.962250) --
            ( 0.416667,-0.400938) --
            ( 0.416667, 0.721688) --
            (-0.833333, 1.443376) -- cycle;

          % ---- P (new irregular, CCW) ----
          \filldraw[fill=pcolor, fill opacity=0.25, draw=pcolor, thick]
            ( 1.109277, 0.000000) --
            ( 0.191370, 0.543377) --
            (-0.158478, 0.266366) --
            (-0.158506,-0.202480) --
            ( 0.189136,-0.406112) -- cycle;

          % ---- -P (reflection of new irregular P, CCW) ----
          \filldraw[fill=mpcolor, fill opacity=0.25, draw=mpcolor, thick]
            (-1.109277,-0.000000) --
            (-0.191370,-0.543377) --
            ( 0.158478,-0.266366) --
            ( 0.158506, 0.202480) --
            (-0.189136, 0.406112) -- cycle;

          % ---- Origin ----
          \fill (0,0) circle (0.6pt);
        
        \end{tikzpicture}
    \caption{The pair $(P,\Sigma)$ is drawn in green and blue respectively, which is strongly oblique as seen by $-P$ in red being contained in $\Sigma^\circ$ in yellow}
    \label{fig:obliquePair}
\end{figure}

We equip the cones $(\mathtt{C}(P), \mathtt{C}(\Sigma))\subset (\mathbb R^{n+1}_y, \mathbb R^{n+1}_x)$ with homogeneous measures $(d\mu, d\nu) = (g_P(y)dy, g_\Sigma(x)dx)$. 
We define the measures $h_P$ and $h_\Sigma$ to be $g_P$ and $g_\Sigma$ restricted to the links $P$ and $\Sigma$.

The measures $d\mu$ and $d\nu$ on the cones must satisfy some structural conditions, both to estimate the energy functional and to apply the Caffarelli theory.
Throughout, our measures will satisfy the following properties:
\begin{itemize}
    \item Homogeneity: $d\mu$ and $d\nu$ are homogeneous measures.
    Specifically, $g_P$ and $g_\Sigma$ are of degrees $\alpha$ and $\beta$ respectively, so 
    \[
    g_P(y) = y_{n+1}^\alpha h_P(\tilde{y})\qquad \text{and}\qquad g_\Sigma(x) = x_{n+1}^\beta h_\Sigma(\tilde{x}).
    \]
    \item Locally finite:
    the densities on the links are bounded, so
    \[\sup_{y \in P}h_P(y) < \infty \qquad \text{and}\qquad  \sup_{x \in \Sigma}h_\Sigma(x) < \infty.
    \]
    We extend $h_P$ and $h_\Sigma$ by 0 outside $P$ and $\Sigma$.
    \item Doubling: there exist constants $C_P,C_\Sigma > 0$ such that 
    \[
    \int_E h_P(y)\,dy \leq C_P \int_{\frac{1}{2}E}h_P(y)\,dy\qquad \text{and}\qquad \int_E h_\Sigma(x)\,dx \leq C_\Sigma \int_{\frac{1}{2}E}h_\Sigma(x)\,dx,
    \]
    where $E$ is any ellipsoid and $\tfrac{1}{2}E$ is rescaled from its center.
\end{itemize}
Further assumptions on the regularity of the measures will improve the regularity of solutions using \cite{Caffarelli, Caffarelli2, Caffarelli3, Caffarelli4, Caffarelli5, Chen-Liu-Wang, Collins-Tong}.

Consider the class of convex functions
\[
\cC^+(P) := \{v : P \to \bR_+ : v \text{ is convex}\}.
\]
The Legendre dual of $v \in \cC^+(P)$ is a convex function $u: \mathbb{R}^n \rightarrow \mathbb{R}$ satisfying the growth condition
\[
\phi_P - C < u < \phi_P
\]
for some $C > 0$. In particular, $\{ u < 0 \}$ is a compact convex set. 

Let $w := u + \phi_{\Sigma^\circ}$, so the free boundary is $\Omega = \{w < 0\} \subset \{ u < 0\}$.
We can now define our functionals.
Let
\[
    I(u) := \int_{\bR^n}K(x,-w(x))\chi_{\{w < 0\}}\,dx
\]
where 
\[
K(x,s) := \int_0^s g_\Sigma(x, \sigma  +\phi_{\Sigma^\circ}(x))\, d\sigma = \int_0^s (\sigma + \phi_{\Sigma^\circ}(x))^\beta h_{\Sigma}(\tfrac{x}{\sigma + \phi_{\Sigma^\circ}(x)})\, d\sigma.
\]
From this definition, one sees immediately that the variation is
\[
\delta I = -\int_{\bR^n}\dot{u}\left[(-u)^\beta h_\Sigma(\tfrac{x}{-u})\right]\chi_{\{w<0\}}\, dx .
\]
Assuming $u$ is $C^2$ and strictly convex in $\Omega= \{w<0\}$ and $\nabla u(\Omega) = P$, we can write
\[
\delta I=\int_P \dot{v} \left[(-v^\star)^\beta h_\Sigma(\tfrac{\nabla v}{-v^\star}) \det D^2 v\right]\,dy,
\]
where $v$ is the Legendre transform of $u$.

We define the functional $J$ as  
\[
J(v) := \frac{1}{n + 1 + \alpha}\int_{P}\frac{h_P(y)}{v^{n+1+\alpha}(y)}\, dy \implies \delta J = - \int_P \dot{v}\left[\frac{h_P(y)}{v^{n+2+\alpha}(y)}\right]\, dy.
\]
Finally, we define the energy functional
\[
\cE(u) := -\log I(u) + J^{-\frac{1}{n+1+\alpha}}(v),
\]
whose $C^2$ critical points solve equation~\eqref{eqn:freeBdyEqnOmega} up to a positive multiplicative constant.

\begin{remark}\label{rem:rescaling}
By homogeneity, we can always rescale a critical point $v_t = tv$ by an appropriate constant to solve equation~\eqref{eqn:FreeBdyEqnP}.
Therefore, we can ignore the multiplicative constant induced by the Euler-Lagrange equations of $\cE$.
This can be shown directly by computing the variation of $\cE(v_t)$ by homogeneity properties of both $I$ and $J$.
Although more complicated in this general density setup, similar formulas can be shown for $I(v_t)$ and $J(v_t)$ as computed in \cite[Lemma 5.2]{TristanBenjy}.
\end{remark}
\begin{lem}\label{lem:IKernelEstimate}
    For $x \in \Omega$, function $K$ satisfies
    \[
    K(x, -w(x)) \lesssim (-u(x))^{1 + \beta}.
    \]
\end{lem}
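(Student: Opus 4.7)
The plan is to evaluate the defining integral of $K$ by a simple change of variable, and then bound the integrand using the locally finite hypothesis on $h_\Sigma$. This is essentially a one-step estimate with no conceptual obstacle.

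First, I would substitute $\tau = \sigma + \phi_{\Sigma^\circ}(x)$ in the definition of $K(x,-w(x))$. Since $-w(x) = -u(x) - \phi_{\Sigma^\circ}(x)$ by definition of $w$, and since $x\in\Omega$ means $w(x)<0$, i.e.\ $-u(x) > \phi_{\Sigma^\circ}(x) \ge 0$, the limits of integration become $\tau \in [\phi_{\Sigma^\circ}(x),\,-u(x)]$, a nondegenerate interval of nonnegative numbers. The integral transforms to
\[
K(x,-w(x)) \;=\; \int_{\phi_{\Sigma^\circ}(x)}^{-u(x)} \tau^{\beta}\, h_\Sigma(x/\tau)\, d\tau.
\]

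Next, I would invoke the locally finite hypothesis on the target density, which gives $\|h_\Sigma\|_\infty := \sup_{\Sigma} h_\Sigma < \infty$ (with $h_\Sigma$ extended by $0$ outside $\Sigma$, as stipulated). This bounds the integrand pointwise by $\|h_\Sigma\|_\infty \tau^{\beta}$, reducing the problem to the elementary power integral
\[
\int_{\phi_{\Sigma^\circ}(x)}^{-u(x)} \tau^{\beta}\, d\tau \;=\; \frac{(-u(x))^{\beta+1} - \phi_{\Sigma^\circ}(x)^{\beta+1}}{\beta+1},
\]
where integrability and the sign of the denominator require $\beta+1>0$, which is part of the standing integrability assumption on homogeneous densities.

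Finally, since $\phi_{\Sigma^\circ}(x)\ge 0$ and $\beta+1>0$, the subtracted term $\phi_{\Sigma^\circ}(x)^{\beta+1}$ is nonnegative, so it can simply be dropped to yield
\[
K(x,-w(x)) \;\le\; \frac{\|h_\Sigma\|_\infty}{\beta+1}\,(-u(x))^{\beta+1},
\]
which is the desired bound. The ``hardest'' step is purely bookkeeping: recognizing that the upper and lower integration limits both have the geometric meaning of heights in $\mathtt C(\Sigma)$ above the basepoint $x$, so the bound morally says that the vertical slice of $g_\Sigma$ over $x$ between the boundary of $\mathtt C(\Sigma)$ and the graph of $-u$ is controlled by the top height raised to the homogeneity power $\beta+1$.
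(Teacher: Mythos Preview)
Your proof is correct and essentially identical to the paper's: both bound $h_\Sigma$ by its supremum, integrate the resulting power $\tau^\beta$ (the paper keeps the variable $\sigma$ and integrates $(\sigma+\phi_{\Sigma^\circ}(x))^\beta$ directly, whereas you make the substitution $\tau=\sigma+\phi_{\Sigma^\circ}(x)$ first), and then discard the nonnegative term $\phi_{\Sigma^\circ}(x)^{1+\beta}$.
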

\begin{proof}
    
    We use the homogeneity of $g$ and $\sup_\Sigma h_\Sigma < \infty$, on $\Omega$ to bound
    \begin{align*}
        K(x,-w(x)) &= \int_0^{-w(x)} (\sigma + \phi_{\Sigma^\circ}(x))^\beta h(\tfrac{x}{\sigma + \phi_{\Sigma^\circ}(x)})\, d\sigma \\
        &\leq \sup_\Sigma h_\Sigma \int_0^{-w(x)} (\sigma + \phi_{\Sigma^\circ}(x))^\beta\,d\sigma \\
        &=\sup_\Sigma h_\Sigma \frac{(-w(x) + \phi_{\Sigma^\circ}(x))^{1 + \beta} - \phi_{\Sigma^\circ}(x)^{1+\beta}}{1+\beta} \\
        &\leq (\sup_\Sigma h_\Sigma) \frac{(-u(x))^{1 + \beta}}{1+\beta}
    \end{align*}
    as desired.
\end{proof}
We now show the homogeneous reduction of the optimal transport equation~\eqref{eqn:OTbtwnCones} to the free boundary equation~\eqref{eqn:FreeBdyEqnP}.
\begin{lem}\label{lem: ansatzReduction}
    Let $\tilde{y} = \left(\tfrac{y_1}{y_{n+1}},\ldots, \tfrac{y_n}{y_{n+1}}\right)$.
    If we have a separation of variables of the form $\varphi = f(y_{n+1}v(\tilde{y}))$, then $\det D^2 \varphi = \frac{(f')^{n}f''v^2}{y_{n+1}^n}\det D^2 v$.
\end{lem}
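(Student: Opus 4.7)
The plan is to set $w(y) := y_{n+1}\,v(\tilde{y})$ so that $\varphi = f(w)$, and then analyze the rank-one perturbation $D^2\varphi = f''(w)\,\nabla w\,(\nabla w)^T + f'(w)\,D^2 w$ supplied by the chain rule. The decisive structural observation is that since $\tilde{y}$ is scale-invariant, $w$ is homogeneous of degree one in $y$, so Euler's identity simultaneously gives $y\cdot\nabla w = w$ and $D^2 w\cdot y = 0$. In particular, $D^2 w$ is singular with one-dimensional kernel spanned by $y$, so $\det D^2 w = 0$, and any contribution to $\det D^2\varphi$ must come from the rank-one term.

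First I would apply the matrix determinant lemma $\det(M + uv^T) = \det M + v^T\operatorname{adj}(M)\,u$ with $M = f'\,D^2 w$, $u = \nabla w$, $v = f''\,\nabla w$; combined with $\operatorname{adj}(f'\,D^2 w) = (f')^n\operatorname{adj}(D^2 w)$ and $\det D^2 w = 0$, this collapses to $\det D^2\varphi = (f')^n f''\,(\nabla w)^T\operatorname{adj}(D^2 w)\,\nabla w$. To evaluate the adjugate I would invoke the general fact that any symmetric $(n+1)\times(n+1)$ matrix $M$ of rank $n$ with one-dimensional kernel $\mathbb{R}\eta$ has rank-one adjugate $\operatorname{adj}(M) = \det{}'(M)\,\eta\eta^T/|\eta|^2$, where $\det{}'(M)$ denotes the product of the nonzero eigenvalues. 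Applying this with $\eta = y$ and using Euler's identity to evaluate the quadratic form yields $(\nabla w)^T\operatorname{adj}(D^2 w)\,\nabla w = \det{}'(D^2 w)\cdot w^2/|y|^2$.

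The last step is to compute $\det{}'(D^2 w)$ explicitly. A direct calculation of the partial derivatives of $w$ produces the factorization $D^2 w = \tfrac{1}{y_{n+1}}\,A\,D^2 v\,A^T$, where $A$ is the $(n+1)\times n$ block matrix with $A^T = \bigl(I \mid -\tilde{y}\bigr)$. Since the nonzero spectrum of $AB$ coincides with that of $BA$, the nonzero eigenvalues of $D^2 w$ match those of $\tfrac{1}{y_{n+1}}\,D^2 v\,(A^T A) = \tfrac{1}{y_{n+1}}\,D^2 v\,(I + \tilde{y}\tilde{y}^T)$, giving $\det{}'(D^2 w) = y_{n+1}^{-n}(1 + |\tilde{y}|^2)\det D^2 v$. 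Combining this with $|y|^2 = y_{n+1}^2(1 + |\tilde{y}|^2)$ and $w = y_{n+1}\,v$ cancels the $1+|\tilde{y}|^2$ factors and produces exactly $\det D^2\varphi = \tfrac{(f')^n f''\,v^2}{y_{n+1}^n}\det D^2 v$. The main obstacle is the singularity of $D^2 w$, which forbids a naive multiplicative calculation; the $1$-homogeneity of $w$ resolves this by pinning down the kernel of $D^2 w$ and converting the adjugate quadratic form into an elementary scalar.
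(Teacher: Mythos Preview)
Your argument is correct and takes a genuinely different route from the paper. The paper proceeds by brute force: it writes out all the second partials $\varphi_{ij},\varphi_{i(n+1)},\varphi_{(n+1)(n+1)}$, arranges them in the block form $\left(\begin{smallmatrix}A & b\\ b^\top & c\end{smallmatrix}\right)$, applies the Schur complement identity $\det D^2\varphi = \det(A)\,(c - b^\top A^{-1}b)$, and then explicitly inverts $A$ via the Sherman--Morrison formula. Your approach instead exploits the $1$-homogeneity of $w = y_{n+1}v(\tilde y)$ conceptually: Euler's relations force $D^2w$ to be singular with kernel $\mathbb{R}y$, so the matrix determinant lemma (in its adjugate form, valid even for singular $M$) isolates the single surviving term $(f')^n f''\,(\nabla w)^\top\operatorname{adj}(D^2w)\,\nabla w$; the rank-one structure of the adjugate and the factorization $D^2w = y_{n+1}^{-1}A\,D^2v\,A^\top$ then finish the computation with no matrix inversion at all. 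The paper's computation is more self-contained but somewhat opaque; your version makes transparent \emph{why} the answer factors so cleanly, namely that the homogeneity degenerates $D^2w$ in exactly one direction and Euler's identity converts the adjugate quadratic form into $w^2/|y|^2$. One small point worth stating explicitly: your step invoking $\operatorname{adj}(D^2w)=\det{}'(D^2w)\,yy^\top/|y|^2$ assumes $D^2w$ has rank exactly $n$, i.e.\ $\det D^2v\neq 0$; when $\det D^2v=0$ the adjugate vanishes identically and both sides of the claimed identity are zero, so the formula still holds, but it is cleanest to note this case separately or appeal to continuity.
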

\begin{proof}
We first compute
\begin{align*}
\partial_i (y_{n+1} v(\tilde{y})) &= y_{n+1} \sum_k v_k\frac{\partial}{\partial y_i}\frac{y_k}{y_{n+1}} = v_i\\
\partial_{n+1}(y_{n+1}v(\tilde{y})) &= v + y_{n+1}\sum_k v_k\frac{\partial}{\partial y_{n+1}}\frac{y_k}{y_{n+1}} = v-\sum_k\frac{v_ky_k}{y_{n+1}}.
\end{align*}
Therefore, we can compute the first derivatives
\[
    \varphi_i =f' v_i \qquad \text{and}\qquad 
    \varphi_{n+1} = f'\left(v - \sum_k \frac{v_ky_k}{y_{n+1}}\right)
\]
and the second derivatives
\begin{align*}
    \varphi_{ij} &= f''v_i v_j + \frac{f'}{y_{n+1}}v_{ij}\\
    \varphi_{i(n+1)} &= f''v_i\left(v - \sum \frac{v_ky_k}{y_{n+1}}\right) - \frac{f'}{y_{n+1}}\sum_k v_{ik}\frac{y_k}{y_{n+1}}\\
    \varphi_{(n+1)(n+1)} &= f'' \left(v - \sum_k \frac{v_ky_k}{y_{n+1}}\right)^2 + \frac{f'}{y_{n+1}}\sum_{k,\ell} v_{k\ell}\frac{y_k}{y_{n+1}}\frac{y_\ell}{y_{n+1}},
\end{align*}
so we can express the Hessian of $\varphi$ as 
\[
D^2 \varphi = \begin{pmatrix}
    A & b \\
    b^\top & c 
\end{pmatrix},\quad A_{ij}=\varphi_{ij},\quad b_i = \varphi_{i(n+1)},\quad c = \varphi_{(n+1)(n+1)}.
\]
We therefore see that $\det D^2 \varphi = \det(A)(c - b^\top A^{-1} b)$, and we compute 
\[
\det(A) =\left(\frac{f'}{y_{n+1}}\right)^{n} \det(v_{ij})
\left( 1 +\frac{y_{n+1} f''}{f'} \sum_{i,j} v_i\, v^{ij}\, v_j \right).
\]
We can  compute $A^{-1}$ as 
\[
(A^{-1})_{ij}
=\frac{y_{n+1}}{f'}v^{ij}
-\frac{y_{n+1}^{2} f''}{f'^{2}}
\frac{\left(\sum_{p} v^{ip} v_p\right)\left(\sum_{q} v^{jq} v_q\right)}
{1+\dfrac{y_{n+1} f''}{f'}\sum_{a,b} v_a\,v^{ab}\,v_b},
\]
which shows that
\[
c - b^{\top} A^{-1} b = \frac{f'' v^{2}}{1 + \dfrac{y_{n+1} f''}{f'} \sum_{i,j} v_i v^{ij}\, v_j}.
\]
Combining the above computations gives the desired formula $\det D^2 \varphi = f'' \left(\frac{f'}{y_{n+1}}\right)^{n} v^{2}\det(v_{ij})$.
\end{proof}

Applying Lemma~\ref{lem: ansatzReduction} with $f(s) = s^{1 + \gamma}$ a homogeneous function, we can reduce the optimal transport between cones to the free boundary Monge-Amp\`ere equation defined on the link.
Define 
\[
1 + \gamma = 1 +\frac{n+1+\alpha}{n+1+\beta}.
\]
Then, $\varphi(y) = (y_{n+1}v(\tilde{y}))^{1 + \gamma}$ solves equation~\eqref{eqn:OTbtwnCones} when $v$ solves equation~\eqref{eqn:FreeBdyEqnP}.

\section{Minimizing solutions for strongly oblique cones}\label{sec:Strict}

In this section, we prove Theorem~\ref{thm:StrongObliqueness}. 
Recall from Definition~\ref{def:Obliqueness} that the pair $(P,\Sigma)$ is \textit{strongly oblique} if 
\[
    \la \p \phi_{P}(0), \p \phi_{\Sigma}(0)\rg + 1 > 0 \iff \inf_{x \in P, y \in \Sigma}\la x, y \rg + 1 > 0 \iff -\ol{P} \subset \mr{int}(\Sigma^\circ). 
\]
The strongly oblique property implies that 
\[
\phi_\cU (y) := \phi_{\Sigma^\circ}(y) - \phi_{P}(-y) > 0
\]
is a strictly positive, homogeneous function. 
It follows by compactness that there exists $C>0$ such that
\[
r|y|\leq \phi_{\cU}(y)\leq C|y|.
\]
Note that $\phi_{\cU}$ will generally not be convex. 

Assuming strong obliqueness, we will show that the functional $\cE$ is uniformly lower bounded.
Recall the notation
\[
w = u + \phi_{\Sigma^\circ}, \qquad \Omega= \{w<0\}.
\]
 Since $\nabla u (\Omega) \subset P$ by assumption, we have a priori gradient bounds on $w$ given by $\nabla w(\Omega) \subset P + \Sigma^\circ$. 

The first result from the strong obliqueness property is that $w$ is uniquely minimized at the origin, and the free boundary is uniformly equivalent to $B_{|w(0)|}(0)$. 
\begin{lem}\label{lem:RoundnessOfOmega}
    Let $\delta = |w(0)|$.
    Then $w(x) \geq w(0)$ with equality if and only if $x=0$. 
    Furthermore, there exist uniform constants $r,R$, depending only on the pair $(P, \Sigma)$, such that we have the containment
    \[
    B_{\delta r}(0) \subset \Omega \subset B_{\delta R}(0).
    \]
    Consequently, we have that $|\Omega| \sim (-u(0))^n$. 
\end{lem}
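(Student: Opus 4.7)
The plan is to deduce all three conclusions from a single linear growth estimate
\[
w(x) \geq w(0) + \ve_0 |x| \qquad \text{for all } x \in \bR^n,
\]
where $\ve_0 > 0$ is produced by strong obliqueness. First I would note that $\phi_{\Sigma^\circ}(0) = 0$ and $u(0) < \phi_P(0) = 0$ together give $w(0) = u(0) < 0$, so $0 \in \Omega$ and $\delta = -u(0)$. A preliminary observation needed to invoke obliqueness is that $\partial u \subset \ol{P}$ pointwise everywhere: for any $p \in \partial u(x)$, applying the subgradient inequality at $z = x + tv$, combining it with the global bound $u \leq \phi_P$, and sending $t \to \infty$ using the $1$-homogeneity of $\phi_P$ yields $\la p, v\rg \leq \phi_P(v)$ for every $v$, forcing $p \in \ol{P}$.

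The core step upgrades the set inclusion $-\ol{P} \subset \mr{int}(\Sigma^\circ)$ from strong obliqueness to a quantitative gap: by compactness of $-\ol{P}$, one has $\ve_0 := \mr{dist}(-\ol{P},\,\bR^n \setminus \Sigma^\circ) > 0$. I would fix any $p \in \partial u(0) \subset \ol{P}$, so that $-p + B_{\ve_0}(0) \subset \Sigma^\circ$, and then test the support function in the direction $x/|x|$ to get
\[
\phi_{\Sigma^\circ}(x) \geq \la -p + \ve_0\, x/|x|,\, x\rg = -\la p, x\rg + \ve_0 |x|, \qquad x \neq 0.
\]
Adding this to the subgradient inequality $u(x) \geq u(0) + \la p, x\rg$ cancels the $\la p, x\rg$ cross terms and produces the desired estimate. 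This immediately gives the strict minimum at $0$, and restricting to $x \in \Omega$ (where $w(x) < 0$) yields $|x| \leq \delta/\ve_0$, hence $\Omega \subset B_{\delta R}(0)$ with $R = 1/\ve_0$.

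For the inner inclusion $B_{\delta r}(0) \subset \Omega$, I would produce a matching Lipschitz upper bound on $w$. Since $\partial u \subset \ol{P}$ globally, $u$ is Lipschitz with constant $L := \sup_{y \in \ol{P}}|y|$, and $\phi_{\Sigma^\circ}$ is Lipschitz with constant $L' := \sup_{y \in \Sigma^\circ}|y|$. With $M := L + L'$, one has $w(x) \leq w(0) + M|x| = -\delta + M|x|$, which is negative on $B_{\delta/M}(0)$, so $r = 1/M$ works. The volume comparison $|\Omega| \sim \delta^n = (-u(0))^n$ then follows by sandwiching $\Omega$ between the two balls. The only nontrivial input is the linear growth estimate, whose entire content is that strong obliqueness supplies a \emph{positive} gap $\ve_0$; mere set inclusion would give only a nonstrict minimum.
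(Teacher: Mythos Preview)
Your proof is correct and follows essentially the same approach as the paper: both arguments sandwich $w(x)-w(0)$ between two homogeneous-of-degree-one functions using the subgradient inequality at the origin, a Lipschitz bound from $\partial u \subset \ol{P}$, and strong obliqueness to make the lower bound strictly positive. The only cosmetic difference is that the paper packages the constants via the support functions $\phi_{\cU}(\theta)=\phi_{\Sigma^\circ}(\theta)-\phi_P(-\theta)$ and $\phi_{P+\Sigma^\circ}(\theta)$, whereas you pass through a Euclidean gap $\ve_0=\mr{dist}(-\ol{P},\partial\Sigma^\circ)$ and a global Lipschitz constant $M=L+L'$; these encode the same quantitative content.
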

\begin{proof}
    For $\delta = |w(0)|$, we consider the values $w(t\theta)$ for any $\theta \in \bS^{n-1}$. 
    We first show that $w = u + \phi_{\Sigma^\circ}$ achieves its infimum at the origin. 
    For any $\theta \in \bS^{n-1}$ and $t \in \bR_{>0}$, we can see that for any $V \in \p u(0) \subset P$, a subgradient of $u$ at 0, we have
    \begin{equation}\label{eqn:wMinAt0}
        t\phi_{P + \Sigma^\circ}(\theta) \geq w(t\theta) - w(0) = u(t\theta) - u(0) + t\phi_{\Sigma^\circ}(\theta) \geq t( \la V, \theta\rg +\phi_{\Sigma^\circ}(\theta))
    \end{equation}
    by the convexity of $u$ and homogeneity of the convex support function.
    Minimizing the right-hand side, we know that 
    \[
    \inf_{V \in \p u(0)} \la V, \theta\rg  \geq \inf_{V' \in P} \la V', \theta\rg  = -\sup_{V' \in P} \la -V', \theta\rg = -\phi_P(-\theta)
    \]
    using the definition of $\phi_P(\theta) = \sup_{V \in P}\la V, \theta\rg$. 
    Thus, strong obliqueness implies that $\la V, \theta\rg + \phi_{\Sigma^\circ}(\theta) > 0$, and therefore
    \[
   { 0 < \phi_{\cU}(t\theta )  < w(t\theta)- w(0) <  \phi_{P+\Sigma^\circ}(t\theta)},
    \]
    from which the claim follows by setting
    \[
    \frac{1}{R} = \inf_{\theta \in \bS^{n-1}} \phi_{\cU}(\theta)\qquad \text{and}\qquad \frac{1}{r} = \sup_{\theta \in \bS^{n-1}}\phi_{P +\Sigma^\circ}(\theta).
    \]
\end{proof}
The estimates on $I$ and $J$ are most easily seen as functions of $|\inf u|$ and $\inf v$. 
The roundness of $\Omega$ and the a priori gradient estimate $|\nabla u| \leq C(P)$  will show that these quantities are uniformly equivalent.
This will reduce the estimates on $\cE$ to a single quantity $v(0)$.

\begin{lem}\label{lem:IEstimateStrict}
    For any $v \in \cC^+(P)$, we have
    \[
    u(0) \sim \inf_{x \in \Omega}u(x)
    \qquad \text{and}\qquad
    I(v) \lesssim (v(0))^{n+1 + \beta}.
    \]
\end{lem}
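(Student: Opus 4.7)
The plan is to derive both estimates by combining the geometric control on $\Omega$ furnished by Lemma~\ref{lem:RoundnessOfOmega} with the pointwise integrand bound of Lemma~\ref{lem:IKernelEstimate}, and then to transfer the resulting $u$-side estimate to a $v$-side estimate via the Legendre duality $u = v^*$.

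For the comparison $u(0) \sim \inf_\Omega u$, I would first observe that $\phi_{\Sigma^\circ}(0) = 0$ forces $w(0) = u(0) < 0$, so $0 \in \Omega$ and hence $\inf_\Omega u \leq u(0)$ (both are negative). For the reverse direction, Lemma~\ref{lem:RoundnessOfOmega} says $w(x) \geq w(0) = u(0)$ on $\Omega$, i.e.\ $u(x) \geq u(0) - \phi_{\Sigma^\circ}(x)$. Using the Lipschitz bound $\phi_{\Sigma^\circ}(x) \leq C|x|$ (with $C$ depending only on $\Sigma$), the inclusion $\Omega \subset B_{\delta R}(0)$, and the identity $\delta = -u(0)$, I obtain
\[
u(x) \;\geq\; u(0) - C\delta R \;=\; (1 + CR)\, u(0) \qquad \text{for all } x \in \Omega.
\]
Taking absolute values this reads $|u(0)| \leq |\inf_\Omega u| \leq (1+CR)|u(0)|$, which is exactly the claim.

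For the bound on $I(v)$, Lemma~\ref{lem:IKernelEstimate} gives $K(x,-w(x)) \lesssim (-u(x))^{1+\beta}$ pointwise on $\Omega$. Since $1+\beta \geq 0$ under the standing homogeneity hypotheses, the monotone estimate $(-u(x))^{1+\beta} \leq (-\inf_\Omega u)^{1+\beta}$ combined with the first claim upgrades this to $(-u(x))^{1+\beta} \lesssim (-u(0))^{1+\beta}$. Using $|\Omega| \sim \delta^n = (-u(0))^n$ from Lemma~\ref{lem:RoundnessOfOmega}, I then compute
\[
I(v) \;=\; \int_\Omega K(x,-w(x))\,dx \;\lesssim\; (-u(0))^{1+\beta}\,|\Omega| \;\lesssim\; (-u(0))^{n+1+\beta}.
\]
Finally, the Legendre transform identity $-u(0) = \sup_{y\in P}\bigl(0 - v(y)\bigr)\cdot(-1) = \inf_P v \leq v(0)$ (using $0 \in P$) yields $I(v) \lesssim v(0)^{n+1+\beta}$. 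There is no serious obstacle: the whole argument is a direct unpacking of the two previous lemmas together with Legendre duality, and the only place requiring mild care is confirming that the exponent $1+\beta$ is nonnegative so that the monotone majorization $(-u(x))^{1+\beta} \leq (-\inf_\Omega u)^{1+\beta}$ is valid.
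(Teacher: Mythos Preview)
Your proof is correct and follows essentially the same route as the paper: combine the geometry of $\Omega$ from Lemma~\ref{lem:RoundnessOfOmega} with the pointwise bound of Lemma~\ref{lem:IKernelEstimate}, then pass to $v(0)$ via Legendre duality. The only cosmetic difference is in the first step: to bound $u(x)$ from below on $\Omega$ you invoke $w(x)\geq w(0)$ and the Lipschitz bound $\phi_{\Sigma^\circ}(x)\leq C|x|$, whereas the paper instead uses the gradient bound $\nabla u\in P$ to get $u(x)\geq u(0)-C(P)|x|$; both yield the same inequality $u(x)\geq(1+CR)\,u(0)$.
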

\begin{proof}

    We first use the strong obliqueness property to show that $u(x_0) \sim u(0)$ uniformly, where $x_0$ achieves the infimum of $u$ over $\Omega$.
    From Lemma~\ref{lem:RoundnessOfOmega}, we know that $\delta = |u(0)|$ gives a uniform control over the distance to $\p \Omega$ along any direction $\theta \in \bS^{n-1}$.
    Since $\nabla u(\Omega) \subset P$, we have a uniform gradient bound $|\nabla u| \leq \mathtt{C}(P)$.
    This Lipschitz bound tells us that for all $x \in \Omega$, we know
    \[
    u(x) \geq u(0) - C(P)|x|,
    \]
    so applying Lemma~\ref{lem:RoundnessOfOmega} which tells us that $|x| \leq |u(0)| R$, the above bound becomes
    \[
    u(x) \geq (1 + C(P)R)u(0).
    \]
    We therefore see that $u(0) \sim \inf_\Omega u$ because
    \[
    u(0) \geq \inf_{\Omega}u \geq \left(1 + C(P)R\right)u(0).
    \]
    The Legendre transform shows that $u(0) \sim \inf_{\Omega} u$ is equivalent to $v(0) \sim \inf_P v$ as claimed. 

    We now estimate $I$ using the control over the geometry of $\Omega$ from strong obliqueness.
    Applying the estimate from Lemma~\ref{lem:IKernelEstimate} pointwise on $\Omega$ shows
    \[
    I(u)= \int_\Omega K(x,-w(x))\, dx \lesssim |\Omega|\sup_{\Omega}(-u(x))^{1+\beta} = |\Omega|(-v(0))^{1+\beta}.
    \]
    Lemma~\ref{lem:RoundnessOfOmega} shows $|\Omega| \sim (-u(0))^n \sim (v(0))^n$, completing the proof.
\end{proof}

\begin{lem}\label{lem:JEstimateStrict}
    For any $v \in \cC^+(P)$, there is a uniform upper bound $J(v) \lesssim (v(0))^{-(n+1+\alpha)}$.
\end{lem}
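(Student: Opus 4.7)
The plan is to reduce the bound on $J$ to the pointwise lower bound $v \gtrsim v(0)$ that already follows from Lemma~\ref{lem:IEstimateStrict}, combined with the fact that $h_P$ is integrable on the compact link $P$.

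First, I would invoke Lemma~\ref{lem:IEstimateStrict}, which (via the Legendre transform) gives the comparability $v(0) \sim \inf_P v$. Since $v$ is convex and positive on $P$, this immediately yields a pointwise lower bound
\[
v(y) \geq \inf_P v \gtrsim v(0) \quad \text{for all } y \in P,
\]
with constant depending only on the pair $(P,\Sigma)$ through the strong obliqueness parameters $r,R$ from Lemma~\ref{lem:RoundnessOfOmega}.

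Next, I would substitute this bound into the definition of $J$. Raising to the $(n+1+\alpha)$-th power and taking reciprocals,
\[
\frac{1}{v(y)^{n+1+\alpha}} \lesssim \frac{1}{v(0)^{n+1+\alpha}},
\]
so that
\[
J(v) = \frac{1}{n+1+\alpha}\int_P \frac{h_P(y)}{v(y)^{n+1+\alpha}}\, dy \;\lesssim\; \frac{1}{v(0)^{n+1+\alpha}} \int_P h_P(y)\, dy.
\]

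Finally, since $P$ is a convex body (hence compact) and $h_P$ is a locally finite density, the structural hypothesis $\sup_P h_P < \infty$ from Section~\ref{sec: prelim} gives $\int_P h_P(y)\, dy \lesssim 1$. Combining this with the previous display yields $J(v) \lesssim v(0)^{-(n+1+\alpha)}$, as claimed.

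There is really no significant obstacle here: the whole estimate is a direct consequence of Lemma~\ref{lem:IEstimateStrict}, which has already done the nontrivial work of converting strong obliqueness into a roundness statement for $\Omega$ (and hence into $v(0) \sim \inf_P v$). The role of this lemma is simply to record the upper estimate on $J$ in a form parallel to Lemma~\ref{lem:IEstimateStrict}, so that the two together control $\mathcal{E}(u) = -\log I(u) + J^{-1/(n+1+\alpha)}(v)$ from below in terms of $v(0)$ alone.
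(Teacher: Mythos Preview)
Your proof is correct and follows essentially the same approach as the paper: bound $v(y)\geq \inf_P v$, invoke Lemma~\ref{lem:IEstimateStrict} (via Legendre duality) to get $\inf_P v \sim v(0)$, and use $\sup_P h_P<\infty$ together with $|P|<\infty$ to control the integral. The paper's write-up is slightly terser but the argument is identical.
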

\begin{proof}
    We can estimate
    \begin{align*}
    J(v) = \int_P \frac{h_P(y)}{v(y)^{n+1+\alpha}}\,dy \lesssim \int_P \frac{1}{(\inf_P v)^{n+1+\alpha}}\,dy \leq C \int_P \frac{1}{(v(0))^{n+1+\alpha}}\,dy 
    \end{align*}
    using the fact that $\inf_P v \sim v(0)$ from Lemma~\ref{lem:IEstimateStrict} and Legendre duality.
\end{proof}

Combining the estimates on $I$ and $J$, we can show that the sublevel sets of $\cE$ are compact and that there is a minimizing function $v$. 
\begin{cor}\label{cor:ELowerBounds}
    For any $v \in \cC^+(P)$, we have an energy lower bound
    \begin{equation}\label{eqn:ELowerBound}
    \cE(v) \gtrsim -\log(v(0)) + v(0) -1.
    \end{equation}
    In particular, $\cE$ has a definite lower bound.
\end{cor}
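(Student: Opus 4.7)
The plan is to directly combine Lemmas~\ref{lem:IEstimateStrict} and~\ref{lem:JEstimateStrict}, which bound $I(v)$ from above and $J(v)$ from above respectively, both in terms of powers of the single positive scalar $v(0)$. After passing to the appropriate negative log / negative power, each becomes a \emph{lower} bound on the corresponding summand of $\cE$, so the full energy becomes bounded below by a scalar function of $v(0)$ alone. The second assertion will then follow from elementary calculus.

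More precisely, I would first invoke Lemma~\ref{lem:IEstimateStrict} to write $I(v) \lesssim v(0)^{n+1+\beta}$ and take negative logarithms to obtain $-\log I(v) \geq -(n+1+\beta)\log v(0) - C$ for some uniform $C$. Next, Lemma~\ref{lem:JEstimateStrict} gives $J(v) \lesssim v(0)^{-(n+1+\alpha)}$; raising both sides to the negative exponent $-\tfrac{1}{n+1+\alpha}$ reverses the inequality and yields $J(v)^{-1/(n+1+\alpha)} \geq c\, v(0)$ for some uniform $c>0$. Summing these two lower bounds produces
\[
\cE(v) \geq -(n+1+\beta)\log v(0) + c\, v(0) - C,
\]
which is exactly the estimate~\eqref{eqn:ELowerBound}, after absorbing constants into the $\gtrsim$ notation.

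For the second assertion, I would observe that any function of the form $t\mapsto -a\log t + bt$ with $a,b>0$ attains its infimum on $(0,\infty)$ at $t=a/b$ with finite minimum value $a - a\log(a/b)$. Applied with $t = v(0)$, $a = n+1+\beta$, $b = c$, this produces a uniform lower bound on the right-hand side above that is independent of $v$, giving the claimed definite lower bound on $\cE$.

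I do not anticipate any real obstacle: the corollary is essentially a one-line algebraic consequence of the two preceding lemmas together with a calculus exercise. The only point requiring mild bookkeeping care is the sign reversal when raising the $J$-bound to the negative power $-\tfrac{1}{n+1+\alpha}$, which is precisely the mechanism by which a linear-in-$v(0)$ term emerges from $J^{-1/(n+1+\alpha)}$ and dominates the logarithmic loss from the $I$-estimate both as $v(0)\to 0^+$ and as $v(0)\to\infty$.
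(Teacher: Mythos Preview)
Your proposal is correct and matches the paper's approach: the corollary is stated without proof, immediately following Lemmas~\ref{lem:IEstimateStrict} and~\ref{lem:JEstimateStrict}, and is understood as the direct combination of those two estimates exactly as you outline. Your bookkeeping on the sign reversal for the $J$ term and the elementary calculus for the definite lower bound are precisely what is intended.
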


The minimizing function $v$ is a priori only convex, so in order to show that $v$ solves ~\eqref{eqn:FreeBdyEqnP}, we must show that it is an Alexandrov solution and then apply the regularity theory due to Caffarelli. 
\begin{prop}\label{prop:minimizingOblique}
    The function $v \in \cC^+(P)$ minimizing $\cE$ solves equation~\eqref{eqn:FreeBdyEqnP} and satisfies $v \in C^\infty(P) \cap C^{1,\epsilon}(\ol{P})$.
\end{prop}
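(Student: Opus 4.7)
The plan is three-fold: establish existence of a minimizer by a compactness argument, derive the Euler--Lagrange equations in a weak (Alexandrov) sense, and bootstrap to classical regularity via Caffarelli's theory.

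\textbf{Existence.} Corollary~\ref{cor:ELowerBounds} bounds $v_k(0)$ uniformly from above and away from zero along any minimizing sequence $\{v_k\}\subset \cC^+(P)$. Lemma~\ref{lem:RoundnessOfOmega} then confines the associated free-boundary domains $\Omega_k$ to a ball of uniformly bounded radius, and since $\nabla v_k(P)\subset \Omega_k$ this gives a uniform Lipschitz bound. Arzelà--Ascoli together with convexity produces a subsequential locally uniform limit $v_\infty\in \cC^+(P)$. Lower semicontinuity of $\cE$ follows for $J$ by dominated convergence using the uniform positive lower bound on $v_k$, and for $I$ by stability of Legendre transforms and of the free-boundary set under uniform convergence of convex functions, together with the explicit form of the kernel $K$ and the estimate in Lemma~\ref{lem:IKernelEstimate}.

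\textbf{Euler--Lagrange.} The main technical difficulty, flagged in the introduction, is that the minimizer $v$ is a priori only convex, so the formal computations of $\delta I$ and $\delta J$ are not directly justified. The variation of $J$ is pointwise in $v$ and causes no trouble. For $I$, I will work on the $x$-side via the Legendre dual $u = v^*$, perturb by a smooth compactly supported $\dot v$, and express $\delta I$ through the Brenier/Alexandrov image measure $(\nabla v)_\ast(\det D^2 v\, dy)$. Testing $\delta\cE=0$ against a sufficiently rich family of admissible perturbations forces the equality of two Radon measures on $P$, which, after the rescaling discussed in Remark~\ref{rem:rescaling}, yields the Monge--Ampère identity of~\eqref{eqn:FreeBdyEqnP} in the Alexandrov sense. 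The boundary relation $v^\star+\phi_{\Sigma^\circ}(\nabla v)=0$ on $\p P$ is equivalent to $\nabla v(P)=\Omega$, which is built into the Legendre correspondence between $P$ and the free boundary set $\Omega = \{w < 0\}$.

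\textbf{Regularity.} Given an Alexandrov solution with right-hand side bounded above and below away from zero (this uses the doubling and local finiteness hypotheses together with the a priori bounds on $v(0)$ and hence on $v^\star$ coming from Lemmas~\ref{lem:IEstimateStrict} and~\ref{lem:JEstimateStrict}), Caffarelli's regularity theory produces strict convexity and interior $C^{1,\alpha}$ estimates for $v$. Once $v$ is $C^{1,\alpha}$ the right-hand side is Hölder continuous and standard Monge--Ampère bootstrap promotes this to $v\in C^\infty(P)$. For boundary regularity, the strong obliqueness of $(P,\Sigma)$ supplies exactly the geometric compatibility needed to apply the boundary regularity theory for the second boundary value Monge--Ampère problem developed in \cite{Caffarelli5, Chen-Liu-Wang, Collins-Tong}, yielding $v\in C^{1,\epsilon}(\ol P)$.

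The principal obstacle is the second step: rigorously extracting the Monge--Ampère equation from the variational principle without assuming a priori $C^2$ regularity, and correctly handling the general (possibly degenerate) densities $h_P,h_\Sigma$ that enter the energy in a more intricate way than in the half-space setting of \cite{TristanFreidYau, TristanBenjy}. Once the equation is in hand, the regularity statement is a routine invocation of well-established theory, with strong obliqueness ensuring geometric compatibility at the boundary.
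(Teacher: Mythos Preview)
Your outline is broadly on target and you have correctly isolated the main difficulty, but the Euler--Lagrange step as you describe it has a genuine gap, and the paper resolves it by a different mechanism than the one you propose.

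You want to compute $\delta I$ by pulling back to $P$ via $\nabla v$ and writing it against the Monge--Amp\`ere measure $(\nabla v)_\ast(\det D^2v\,dy)$, then match Radon measures by testing against enough $\dot v$. The problem is that the change of variables $x=\nabla v(y)$ and the identity $\delta u(x)=-\dot v(\nabla u(x))$ both require exactly the $C^2$ strict convexity you do not yet have; for a merely convex $v$ the formula $\delta I=\int_P\dot v\,(-v^\star)^\beta h_\Sigma(\tfrac{\nabla v}{-v^\star})\det D^2v\,dy$ is not available. The paper avoids this entirely. It never tries to write $\delta I$ on the $y$-side. Instead it exploits that $u_t=(v+t\,\delta v)^*$ is jointly convex in $(x,t)$, so the one-sided derivative $\delta u=\partial_t u_t|_{t=0}$ exists and satisfies the inequality $\delta u\le u_1-u_0$. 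Combining this with the first-variation identity $0=\int\delta u\,d\mu+\int\delta v\,d\nu$ (and using the test path $\delta v\equiv 1$ to normalize the two denominators to the same constant) yields directly the Kantorovich-type inequality
\[
\int u\,d\mu+\int v\,d\nu\le\int\hat u\,d\mu+\int\hat v\,d\nu
\]
for every competitor $\hat v$. Gangbo--McCann then identifies $(u,v)$ as Kantorovich potentials, which is what gives the Alexandrov Monge--Amp\`ere equation. The point is that only an \emph{inequality} for $\delta u$ is needed, and that inequality is a soft consequence of convexity---no regularity enters.

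Two smaller issues in your regularity paragraph. First, the right-hand side is \emph{not} bounded away from zero in general: $h_P$ and $h_\Sigma$ are allowed to degenerate at the boundary, and only the doubling hypothesis is assumed. Caffarelli's $C^{1,\epsilon}$ theory applies under doubling, which is how interior regularity is obtained. Second, the correct reference for the $C^{1,\epsilon}(\ol P)$ boundary statement under degenerate doubling densities is Jhaveri--Savin \cite{Jhaveri-Savin}, not \cite{Chen-Liu-Wang} or \cite{Collins-Tong}; the latter require stronger structural assumptions than are available here.
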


\begin{proof}
    Let $\hat{v} \in \cC^+(P)$ and $\hat{u} = \hat{v}^*$. 
    Since $v$ minimizes $\cE$, we know that 
    \[
    -\log I(v) + J^{-\frac{1}{n+1+\alpha}}(v) \leq -\log I(\hat{v}) + J^{-\frac{1}{n+1+\alpha}}(\hat{v}). 
    \]
    Let $u = v^*$ for $u$ the minimizer of $\cE$, and define two probability measures
    \[
    d\mu := \frac{(-u)^\beta h_\Sigma(\tfrac{x}{-u})\chi_{\Omega}}{\int_\Omega (-u)^\beta h_\Sigma(\tfrac{x}{-u})\,dx}dx\qquad \text{and}\qquad  d\nu := \frac{ v^{-(n+2+\alpha)} h_P(y)}{\int_Pv^{-(n+2+\alpha)}  h_P(y) \, dy}dy. 
    \]
    We need to show 
    \begin{equation}\label{eqn:OTIneq}
    \int_{\bR^n}u\, d\mu + \int_P v\, d\nu \leq \int_{\bR^n}\hat{u}\, d\mu + \int_P \hat{v}\, d\nu. 
    \end{equation}

    Let $v_t = v + t(\delta v)$ and consider 
    \begin{equation}\label{eqn:OTfromEnergy}
    \begin{aligned}
    0 &= \frac{d}{dt}\bigg\vert_{t = 0}\cE(v_t)\\
    &= \frac{1}{I(u)}\int_{\Omega} (-u)^\beta h_\Sigma(\tfrac{x}{-u(x)})(\delta u)\, dx + \frac{1}{J(v)^{1+\frac{1}{n+1+\alpha}}}\int_Pv^{-(n+2+\alpha)}  h_P(y) (\delta v)\, dy.
    \end{aligned}
    \end{equation}
    The variation $\delta u$ can be computed as  $\delta u = \frac{d}{dt}\big\vert_{t = 0}u_t^*$.
    By the Legendre transform,
    \[
    u_t(x) = \sup_{y \in P} \la x,y \rg - v(y) - t\delta v(y)
    \]
    is jointly convex in $(x,t)$. Therefore, we have the inequality
    \begin{equation}\label{eqn:deltaUIneq}
    \delta u = \pt u_t \vert_{t=0} \leq \int_0^1 \pt u_t\, dt = u_1 - u_0.
    \end{equation}
If we consider the path $v_t = v + t$, we see that both $\delta v = 1$ and $\delta u = -1$ are constants. 
    Therefore, equation~\eqref{eqn:OTfromEnergy} becomes
    \[
    \frac{\int_\Omega (-u)^\beta h_\Sigma(\tfrac{x}{-u})\, dx}{I(u)} = \frac{\int_Pv^{-(n+2+\alpha)}  h_P(y) \, dy}{J^{1 + \frac{1}{n+1+\alpha}}(v)}.
    \]
    Dividing ~\eqref{eqn:OTfromEnergy} by this constant shows that the minimizing property is
    \begin{equation}\label{eqn:variationalInequalityMinimizer}
         0 = \int_{\bR^n} (\delta u)\, d\mu + \int_P (\delta v)\, d\nu.
    \end{equation}
Consider the family $v_t = v + t(\hat{v} - v)$ so that $\delta v = (\hat{v} - v)$.
By \eqref{eqn:deltaUIneq}, we have $\delta u \leq \hat{u} - u$. 
Combining this with ~\eqref{eqn:variationalInequalityMinimizer}, we obtain
    \[
    0 \leq \int_{\bR^n}(\hat{u} - u)\, d\mu + \int_P (\hat{v} - v) \, d\nu,
    \]
    which is the desired inequality~\eqref{eqn:OTIneq}.

    The results of Gangbo-McCann \cite{Gangbo-McCann} tell us that since $\nabla u$ solves the optimal transport equation from $d\mu$ to $d\nu$, $u$ solves equation~\eqref{eqn:freeBdyEqnOmega} in the Alexandrov sense.
    Since $h_P$ is a doubling measure on $P$, we can apply the Caffarelli interior regularity theory \cite{Caffarelli} to upgrade the weak Alexandrov solution to a $C^{1,\eps}$ solution in $P$.
    Moreover, from Jhaveri-Savin \cite[Theorem 1.1]{Jhaveri-Savin}, we know that there is some $\epsilon > 0$ such that $v$ is $C^{1,\epsilon}$ at the boundary of $P$ for some $\epsilon > 0$ and $u \in C^{1,\epsilon}(\bR^n)$.
\end{proof}

\begin{proof}[Proof of Theorem~\ref{thm:StrongObliqueness}]
Theorem~\ref{thm:StrongObliqueness} follows from scaling and homogenizing $v$, as discussed in Remark~\ref{rem:rescaling}.
\end{proof}

\begin{remark}
    Without the assumption of strong obliqueness, one can construct explicit families of functions for which the energy functional $\mathcal{E}$ is not bounded from below.
\end{remark}

\section{Strongly partially oblique cones}\label{sec:Non-strict}

We now examine the case where the target cone is not a strict cone. 
Let us assume that $\Sigma = \Sigma^k \times \bR^{n-k}$ where $\Sigma^k\subset \mathbb R^k$ is a bounded convex set. 
We introduce the following notation arising from the splitting induced by $\Sigma$.
\begin{itemize}
    \item Define $\bA''$ to be the $k$-dimensional subspace containing $\Sigma^\circ$ and $\bA'$ to be its annihilator in the dual space.
    \item The free boundary domain is $\Omega = \{ u + \phi_{\Sigma^\circ} < 0\}$ for $u = v^*$. 
    We denote by $\Omega'$ the slice $\Omega \cap \bA'$. 
\end{itemize}

Since $\phi_{\Sigma^\circ}$ is independent of the $x'$ directions, the functional $I$ is invariant under translations in $x'$.
However, $J$ is not.
This is similar to the situation which appeared in \cite{TristanBenjy,TristanFreidYau}, where $\phi_{\Sigma^{\circ}} \equiv 0$. 
The strategy in this context combines the approach normalizing $J$ over the translations in the $\bA'$ directions together with the strategy in the previous section deducing the compactness in the $\bA''$ directions from the strong obliqueness condition.

Motivated by this invariance, we define the space of {\em normalized} convex functions to be
\[
\cC'(P) := \left\{v \in \cC^+(P) : \int_P \langle y, x'\rangle \frac{h_P}{v^{n+2+\alpha}}\, dy = 0  \text{ for all } x' \in \bA'\right\}.
\]

\begin{defn}\label{defn: vanishingOrder}
    We say that $h_P$ has vanishing order at most $\mathtt{v}$ if for all $y_0 \in \p P$, there is some cone $\mathtt{C}$, and $\delta > 0$, such that
    \[
    h_P(y) \gtrsim d(y,y_0)^{\mathtt{v}}\quad \text{for all } y \in \{y_0 + \mathtt{C}\} \cap B_\delta(y_0).
    \]
\end{defn}

We first show that for every function $v \in \cC^+(P)$, there is an affine function $\ell$ such that $v - \ell \in \cC'(P)$.
\begin{lem}\label{lem:normalization}
    Let $\Omega' =\{w < 0\}\cap \bA'$ and suppose that $h_P$ has vanishing order at most $1 + \alpha$. 
    For any $v \in \cC^+(P)$, the map $\Omega' \ni x \mapsto f(x') = J(v - \la x', \cdot\rg)$ is a strictly convex function defined on $\Omega'$ which diverges to $\infty$ as $x' \to \p \Omega'$.
    Furthermore, $f$ is minimized at the unique point $x_0' \in \Omega'$ such that 
    \[
    \int_P \langle {y}, x'\rangle \frac{h_P(y) }{(v - \la x_0',y\rg)^{n+2+\alpha}}\,dy = 0
    \]
    for all $x' \in \bA'$.
\end{lem}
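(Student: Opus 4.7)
The plan is to establish four properties of $f$: finiteness on $\Omega'$, strict convexity, blow-up at $\p \Omega'$, and the Euler--Lagrange characterization of the minimizer. Existence and uniqueness of the minimizer then follow immediately.

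\noindent\textbf{Finiteness and strict convexity.} Since $\Sigma^\circ \subset \bA''$ and $\bA'$ is its annihilator, $\phi_{\Sigma^\circ}$ vanishes identically on $\bA'$. Hence for $x' \in \Omega'$ the condition $u(x') < 0$ translates to $v(y) - \la x', y\rg \geq |u(x')| > 0$ uniformly on $P$, and local finiteness of $h_P$ gives $f(x') < \infty$. Differentiating twice under the integral yields
\[
D^2 f(x')[\zeta, \zeta] = (n+2+\alpha) \int_P \frac{\la y, \zeta\rg^2\, h_P(y)}{(v(y) - \la x', y\rg)^{n+3+\alpha}}\, dy,
\]
which is strictly positive for every $\zeta \in \bA' \setminus \{0\}$ because $\la \cdot, \zeta\rg$ is a nonzero linear form on $\mr{int}(P)$ and $h_P$ is nontrivial there.

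\noindent\textbf{Blow-up at $\p \Omega'$.} This is the main step. For any $x_k' \to \bar{x}' \in \p \Omega'$, the limit function $\psi := v - \la \bar{x}', \cdot\rg$ is convex, nonnegative, and attains the value $0$ at some $y_0 \in \ol{P}$ (since $u(\bar{x}') = 0$). Pointwise convergence $\psi_k \to \psi$ with nonnegative integrands together with Fatou's lemma reduce the claim to showing
\[
\int_P \frac{h_P(y)}{\psi(y)^{n+1+\alpha}}\, dy = \infty.
\]
Convexity of $\psi$ combined with boundedness of $v$ on $\ol{P}$ yields the linear bound $\psi(y) \leq C|y - y_0|$ for $y$ in any cone from $y_0$ into $P$. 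I would then split into cases: if $y_0 \in \mr{int}(P)$, the interior positivity of $h_P$ (from the structural hypothesis $d\mu\gtrsim d_{\p\mathtt{C}}^{1+\alpha}dy$ in Theorem~\ref{thm:PartiallyStrongObliqueness}) gives $h_P\gtrsim 1$ on a small ball about $y_0$, producing divergence as $\int |y - y_0|^{-(n+1+\alpha)}$; if $y_0 \in \p P$, the vanishing-order hypothesis gives $h_P(y) \gtrsim |y - y_0|^{1+\alpha}$ on a cone at $y_0$, and the two exponents balance exactly to yield
\[
\int \frac{h_P(y)}{\psi(y)^{n+1+\alpha}}\, dy \gtrsim \int |y - y_0|^{(1+\alpha)-(n+1+\alpha)}\, dy = \int |y - y_0|^{-n}\, dy = \infty,
\]
a logarithmic divergence in $\bR^n$.

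\noindent\textbf{Minimizer.} Strict convexity and boundary blow-up together produce a unique $x_0' \in \Omega'$ at which $f$ attains its infimum. The Euler--Lagrange equation $\nabla f(x_0') \cdot \xi = 0$ for all $\xi \in \bA'$ unwinds to the stated orthogonality relation. The main obstacle is the boundary blow-up: the exponent $1+\alpha$ in the vanishing-order hypothesis is calibrated precisely to deliver the borderline non-integrable power $|y-y_0|^{-n}$ when the minimizer $y_0$ lies on $\p P$, so controlling the Lipschitz behavior of $\psi$ at these boundary minimizers against the vanishing of $h_P$ is where the argument really bites.
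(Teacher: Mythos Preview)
Your proof is correct and follows essentially the same route as the paper's: both compute the second variation of $J$ to obtain strict convexity, and both establish boundary blow-up by balancing the vanishing order $1+\alpha$ of $h_P$ against the linear growth of $v - \langle \bar{x}',\cdot\rangle$ at its zero $y_0$, yielding the borderline non-integrable $|y-y_0|^{-n}$ on a cone. You are in fact somewhat more careful than the paper---explicitly justifying the Lipschitz bound $\psi(y)\leq C|y-y_0|$ via convexity and boundedness, invoking Fatou to pass the divergence through the limit $x_k'\to\bar{x}'$, and separating the interior case---whereas the paper's proof compresses these steps into a single line.
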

\begin{proof}
    Let $v_t = v - t\la x', \cdot \rg$.
    Since, $x' \in \Omega'$, we have $v_t > 0$ in $P$.
    Let $\Phi(t) := J(v_t)$. 
    We compute
    \[
    \dot{\Phi} = \int_P \frac{h_P}{v_t^{n+2+\alpha}}\la y, x'\rg\,dy\qquad \text{and}\qquad \ddot{\Phi} = (n+2+\alpha)\int_P \frac{h_P}{v_t^{n+3+\alpha}}\la  y,x'\rg^2\,dy.
    \]
    For $x_0' \in \Omega'$, define $v_{x_0'} = v - \la x_0', \cdot \rg$, and write  $u_{x_0}(x) = u(x+x_0)$ and $w_{x_0} = u_{x_0} + \phi_{\Sigma^\circ}$. 
    By duality, $\{w_{x_0} < 0\} = \Omega- x_0$.
    Recall that $h_P(y) \gtrsim d_{\p P}^{1+\alpha}$. 
    Suppose that $v_{x_0'}$ vanishes at some $y_0 \in \ol{P}$. 
    There is some cone $\mathtt{C}$ with non-empty interior such that $K_{y_0} = \{y_0 + \mathtt{C}\} \cap B_\delta(y_0) \subset P$.
    We can estimate
    \[
    J(v_{x_0}) = \int_P v^{-(n+1+\alpha)}h_P(y)\,dy \gtrsim \int_{K_{y_0}} d(y,y_0)^{-n}\,dy = \infty.
    \]
    Therefore, $f$ diverges to infinity as $x'$ approaches $\p \Omega'$, proving the result.
\end{proof}

The following key estimate controls the $I$ functional.  The idea is that $\Omega$ can be viewed as a fibration over $\Omega'$, and by strong partial obliqueness, the fibers are controlled by $|u(x')|$.
This follows from a fiberwise version of Lemma~\ref{lem:RoundnessOfOmega} at the infimum of $u$ over $\Omega'$.
\begin{lem}\label{lem:IBoundNonStrictCones}
    There is a lower bound on $-\log I(u)$ given by 
    \[
    -\log I(u) \gtrsim -\log (|\Omega'|(v(0))^{k+1+\beta}) - 1.
    \]
\end{lem}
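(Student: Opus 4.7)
The plan is to mirror the strategy from the strongly oblique case (Lemma~\ref{lem:RoundnessOfOmega} and Lemma~\ref{lem:IEstimateStrict}) applied fiberwise over $\bA'$, using strong partial obliqueness to control the $\bA''$-directions. First, by Lemma~\ref{lem:IKernelEstimate}, the integrand satisfies $K(x, -w(x))\lesssim (-u(x))^{1+\beta}$, so
\[
I(u)\lesssim \int_\Omega(-u(x))^{1+\beta}\,dx.
\]
Since $\phi_{\Sigma^\circ}$ depends only on the $\bA''$-component of $x$, I decompose $\Omega$ via its $\bA''$-fibers $\Omega^{x'}:=\{x''\in\bA'' : (x'',x')\in\Omega\}$ for $x'\in\Omega'$.

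The key geometric step is a fiberwise analogue of Lemma~\ref{lem:RoundnessOfOmega}. Setting $\delta(x'):=-u(0,x')>0$ for $x'\in\Omega'$, I claim
\[
\Omega^{x'}\subset B^{\bA''}_{R\,\delta(x')}(0), \qquad \sup_{\Omega^{x'}}(-u(\cdot,x'))\lesssim \delta(x'),
\]
with constants depending only on the pair $(P\cap\bA'',\Sigma^k)$. The proof should follow Lemma~\ref{lem:RoundnessOfOmega} verbatim, applied to the restricted convex function $u(\cdot, x')\colon\bA''\to\bR$, once one verifies the fiberwise obliqueness inequality $\langle V'',\theta''\rangle+\phi_{\Sigma^\circ}(\theta'')>0$ for appropriate $V''$ in the $\bA''$-subgradient of $u(\cdot, x')$ at the origin and all $\theta''\in\bS^{k-1}$. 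This inequality should follow from the strong partial obliqueness condition $-\ol{P\cap\bA''}\subset (\Sigma^k)^\circ$.

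Given the fiberwise estimates, integration over $\Omega'$ yields
\[
\int_\Omega (-u)^{1+\beta}\,dx \lesssim \int_{\Omega'}\delta(x')^{k+1+\beta}\,dx' \leq v(0)^{k+1+\beta}\,|\Omega'|,
\]
where the last inequality uses $\delta(x')\leq -\inf u = v(0)$ by Legendre duality. Combining with the reduction above gives $I(u)\lesssim v(0)^{k+1+\beta}|\Omega'|$; taking $-\log$ and absorbing the multiplicative constant into the additive $-1$ yields the stated bound.

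The main obstacle is verifying the fiberwise obliqueness inequality, since $\p u(0,x')$ a priori lies in $P$ rather than in the slice $P\cap\bA''$, whereas the partial obliqueness condition directly controls only the latter. A secondary technical point is extending the integration from $\Omega'$ to the full projection of $\Omega$ onto $\bA'$ (which can be strictly larger); I anticipate controlling the extra contribution using either the normalization $v\in\cC'(P)$ from Lemma~\ref{lem:normalization} or the concavity of the partial minimum function $x'\mapsto -\inf_{x''}(u+\phi_{\Sigma^\circ})(x'',x')$, together with the concentration of $(-u)^{1+\beta}$ near the minimum of $u+\phi_{\Sigma^\circ}$.
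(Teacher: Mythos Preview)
Your identification of the main obstacle is exactly right, and it is fatal for the fiberwise strategy as stated. For a generic $x'\in\Omega'$, the $\bA''$-subgradient of $u(\cdot,x')$ at the origin is the $\bA''$-component of some $V\in\partial u(0,x')\subset P$, and this component lies in the \emph{projection} of $P$ onto $\bA''$, not in the \emph{slice} $P\cap\bA''$; strong partial obliqueness constrains only the latter, so the inequality $\langle V'',\theta''\rangle+\phi_{\Sigma^\circ}(\theta'')>0$ can fail. Consequently neither the containment $\Omega^{x'}\subset B^{\bA''}_{R\delta(x')}(0)$ nor the bound $\sup_{\Omega^{x'}}(-u)\lesssim\delta(x')$ follows from the argument of Lemma~\ref{lem:RoundnessOfOmega}, and the workarounds you suggest (normalization in $\cC'(P)$, concavity of the partial minimum) do not supply it.

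The paper sidesteps this by abandoning the fiberwise estimate in favor of a single distinguished base point. Let $x'_0\in\Omega'$ minimize $u$ on $\bA'$; since the $\bA'$-derivative of $u$ vanishes there, there is a subgradient $y\in\partial u(x'_0)$ lying entirely in $\bA''$, hence in $P\cap\bA''$. For this $y$ strong partial obliqueness does apply, and one obtains for \emph{every} $x'$ and every $x''$ the uniform lower bound
\[
w(x'+x'')\;\geq\;u(x'_0)+\langle x'',y\rangle+\phi_{\Sigma^\circ}(x'')\;\geq\;u(x'_0)+\tfrac{1}{R}\,|x''|.
\]
Thus all fibers are controlled by the single radius $R|u(x'_0)|$, giving $|\Omega|\lesssim|\Omega'|\,|u(x'_0)|^{k}$; a short Lipschitz argument as in Lemma~\ref{lem:IEstimateStrict} then shows $|u(x'_0)|\sim v(0)$. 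Combined with the crude bound $I(u)\leq|\Omega|\,(v(0))^{1+\beta}$ rather than your refined $\int_{\Omega'}\delta(x')^{k+1+\beta}\,dx'$, this yields the lemma. The missing idea is therefore to trade the pointwise fiber estimate for a single global one anchored at the $\bA'$-minimizer, where the subgradient is forced into the slice $P\cap\bA''$.
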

\begin{proof}
    We can estimate
    \begin{equation}\label{eqn:IestimateFromK}
    I(u) \leq |\Omega|[\sup_{\Omega} K(x,-w(x))] \lesssim |\Omega|[\sup_\Omega (-u(x))^{1 + \beta}] = |\Omega|(v(0))^{1+\beta}
    \end{equation}
    from Lemma~\ref{lem:IKernelEstimate}. 
    To complete the proof, we will show that 
    \[
    |\Omega| \lesssim 
    |\Omega'|(v(0))^k |
    \]
    using strong partial obliqueness.

    Fix the standard inner product on $\bR^n_y$, and take the inner product on $\bR^n_x$ induced by duality.
    Given $\bA''\subset \bR^n_y$, we can define $\tilde{\bA}'$ as the orthogonal subspace.
    Let $\tilde{\bA}''$ be the annihilator of $\tilde{\bA}'$ under the duality pairing.  Then $\tilde{\bA}''$ is orthogonal to $\bA'$. 
    Let
    \[
    \Pi': \bR^n_x \rightarrow \bA'.
    \]
    We write $x=x'+x''$ as the orthogonal decomposition, with $x' \in \bA'$ and $x'' \in \tilde{\bA}''$ denoting the orthogonal projection.
    Finally, denote by $\tilde{\bS}^k \subset \tilde{\bA}''$ the collection of all unit vectors in $\tilde{\bA}''$.

    We define
    \[
        \frac{1}{R} = \inf_{\tilde{\theta} \in\tilde{\bS}^k}\phi_{\Sigma^\circ}(\tilde{\theta}) - \phi_{P\cap \bA''}(-\tilde{\theta}) \qquad \text{and}\qquad \frac{1}{r} = \sup_{\theta \in {\bS}^n} \phi_{\Sigma^\circ}({\theta}) + \phi_{P}({\theta}).
    \]
    The strong partial obliqueness condition states that $\tfrac{1}{R} > 0$.
    Let $\Omega_{x'} = \{x \in \Omega : \Pi'(x) = x'\}$. 
    Since $\nabla u \in P$, the definition of $r$ shows 
    \[
    w(x' + x'') \leq u(x') + \frac{1}{r}|x''|,
    \]     
    which implies we have the containment
    \[
    B''_{r|u(x')|}(x') \subset \Omega_{x'}\quad \text{where}\quad  B''_{\rho}(x') =\{x=x' + x'' \in \mathbb R^n_x : |x''| \leq \rho \}.
    \]
    Since $\phi_{\Sigma^\circ}\big|_{\bA'} \equiv 0$, the boundary data imply that $u\big\vert_{\p \Omega'} = 0$.
    In particular, there exists some $x_0' \in \mr{int}(\Omega')$ at which $u\big|_{\Omega'}$ is minimized. 
    We may therefore choose $y \in \p u(x_0') \cap \bA''\subset P\cap \bA''$.
    Since $u$ sits above the supporting hyperplane defined by $y$, we have 
\begin{align*}
        u(x' + x'') &\geq  u(x_0') + \la x' - x'_0,y\rg +\la  x'',y\rg\\
        & \geq u(x_0') - \phi_{P \cap \bA''}(-x'')
    \end{align*}
    using $\la - x'', y\rg \leq \phi_{P \cap \bA''}(-x'')$ in the last line by our choice of $y \in P\cap \bA''$. 
    Adding $\phi_{\Sigma^\circ}(x)$ and using that $\phi_{\Sigma^\circ}$ is independent of $x'$, we get 
    \begin{equation}\label{eqn:wLowerEstimePSO}
    w(x' + x'') \geq u(x_0') + \phi_{\Sigma^\circ}(x'') - \phi_{P \cap \bA''}(-x'') \geq u(x_0')+\frac{1}{R}|x''|
    \end{equation}
    using strong partial obliqueness.
    Therefore, we see 
    \begin{equation}\label{eqn:OmegaFiberEst}
    |\Omega| = \int_{\Omega'}|\Omega_{x'}|\,dx' \leq R^k |\Omega'||u(x'_0)|^k.
    \end{equation}
    
    To complete the proof, we show that $u(x'_0) \sim \inf_\Omega u$. 
    From equation~\eqref{eqn:wLowerEstimePSO}, $\Omega_{x'_0}$ will be trapped by the inequalities on $w(x_0' + x'')$
    \[
    u(x_0') + \frac{1}{R}|x''| \leq w(x_0' + x'') \leq u(x_0') + \frac{1}{r}|x''| 
    \]
    analogously to equation~\eqref{eqn:wMinAt0} in the strongly oblique setting.
    From equation~\eqref{eqn:wLowerEstimePSO}, for any $x \in \Omega'$, we know that $|x''| \leq R|u(x_0')|$, or equivalently, $-|x''| \geq Ru(x_0')$.
    Letting $\gamma = \sup_{\tilde{\theta} \in \tilde{\bS}^k} \phi_{P \cap \bA''}(\tilde{\theta})$ and combining the above estimates shows 
    \begin{equation}\label{eqn:infOmegasiminfOmega'}
    -v(0) = \inf_\Omega u = u(x' + x'') \geq u(x_0') - \phi_{P \cap \bA''}(-x'') \geq u(x_0') - \gamma|x''| \geq (1 + \gamma R)u(x_0').
    \end{equation}
    Combining inequalities~\eqref{eqn:IestimateFromK}, \eqref{eqn:OmegaFiberEst}, and~\eqref{eqn:infOmegasiminfOmega'} completes the proof.
    
\end{proof}

We now show the requisite bound on $J$ that, in combination with Lemma~\ref{lem:IBoundNonStrictCones}, will show that $\cE$ is uniformly lower bounded on normalized functions. 

\begin{lem}\label{lem:v(0)infv}
    Let $v \in \cC^+(P)$ be such that $\Omega'$ is in John's position. 
    Then, there exists a uniform $c > 0$ depending on $(P,\Sigma)$ such that $cv(0) \leq \inf_P v$.
\end{lem}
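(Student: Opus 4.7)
The plan is to relate $v(0)$ and $\inf_P v = -u(0)$ through the Legendre dual $u = v^*$, showing $|u(x_0')| \lesssim |u(0)|$, where $x_0' \in \mr{int}(\Omega')$ is the minimizer of $u|_{\Omega'}$. The chain of inequalities leading to \eqref{eqn:infOmegasiminfOmega'} in the proof of Lemma~\ref{lem:IBoundNonStrictCones} already gives
\[
v(0) \leq (1 + \gamma R)|u(x_0')|,
\]
so combining this with $|u(x_0')| \lesssim |u(0)|$ would yield $\inf_P v \gtrsim v(0)$.

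Since $\phi_{\Sigma^\circ}$ vanishes identically on $\bA'$, the restriction $u|_{\bA'}$ is convex with $u|_{\p \Omega'} = 0$, and $u(0) = -\inf_P v < 0$ places $0 \in \mr{int}(\Omega')$. Assuming $x_0' \neq 0$ (the case $x_0' = 0$ is immediate from the estimate above), I would parameterize the line through $x_0'$ and $0$ as $\ell(s) = (1-s)x_0'$, and let $s_+ > 1$ be the unique value for which $\ell(s_+) \in \p \Omega'$, i.e., where the line exits $\Omega'$ on the far side of $0$ from $x_0'$; this exists since $\Omega'$ is bounded.

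Writing $0 = (1 - s_+^{-1})x_0' + s_+^{-1}\ell(s_+)$ as a convex combination and applying convexity of $u|_{\bA'}$ together with $u(\ell(s_+)) = 0$, I would obtain
\[
u(0) \leq (1 - s_+^{-1})u(x_0'),
\]
which since $u(x_0') < 0$ rearranges to $|u(0)| \geq (1 - s_+^{-1})|u(x_0')|$. The John's position assumption enters to bound $s_+ - 1$ away from zero uniformly: $\Omega'$ is comparable to $B_\rho \cap \bA'$ for $\rho = |\Omega'|^{1/(n-k)}$, so $|x_0'| \lesssim \rho$ while $|\ell(s_+)| = (s_+ - 1)|x_0'| \gtrsim \rho$, yielding $s_+ - 1 \gtrsim 1$ with constants depending only on $(P, \Sigma)$.

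The main structural input is the comparability $v(0) \sim |u(x_0')|$ extracted from the proof of Lemma~\ref{lem:IBoundNonStrictCones}, where strong partial obliqueness is used. Beyond this, the argument is a standard convexity-plus-roundness computation on the slice $\bA'$. The essential role of John's position is to prevent degeneration: without control on $s_+ - 1$, the origin could sit arbitrarily close to $\p\Omega'$ in the direction opposite $x_0'$, allowing $s_+ \to 1$ and breaking the estimate. With the roundness of $\Omega'$ in hand, convexity uniformly compares the interior minimum of $u|_{\bA'}$ to its value at $0$.
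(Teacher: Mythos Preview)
Your approach is essentially the same as the paper's. The paper's proof cites \cite[Lemma~3.3]{TristanBenjy} for the bound $|\inf_{\Omega'} u| \leq (n-k+2)\,|u(0)|$, which is exactly what your convexity-plus-roundness computation on the slice $\bA'$ establishes directly, and both proofs then finish by invoking the strong partial obliqueness comparison $\inf_\Omega u \sim \inf_{\Omega'} u$ from the chain leading to~\eqref{eqn:infOmegasiminfOmega'}.
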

\begin{proof}
    Consider $u\vert_{\Omega'}$.
    Applying \cite[Lemma 3.3]{TristanBenjy} shows that $\frac{|\inf_{\Omega'}u|}{n-k+2} \leq |u(0)|$. 
    To complete the proof, we recall that strong partial obliqueness shows $\inf_\Omega u(x) \sim \inf_{\Omega'}u(x')$. 
\end{proof}

\begin{lem}\label{lem:JBoundNonStrictCones}
Let $u =v^*$ be such that $\Omega' = \{u + \phi_{\Sigma^\circ} < 0 \} \cap \bA'$ is in John's position.
Then, we have
\[
J(v) \lesssim  |v(0)|^{-(k+1+\alpha)}|\Omega'|^{-1}.
\]
\end{lem}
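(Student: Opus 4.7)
The plan is to establish a sharpened pointwise lower bound
\[
v(y) \gtrsim v(0) + |\Omega'|^{1/(n-k)}\,|y^{\perp}|,\quad y \in P,
\]
where we fix an orthogonal decomposition $\bR^n_y = \bA'' \oplus (\bA'')^\perp$ and write $y = y'' + y^\perp$, and then integrate against the bounded density $h_P$, exploiting boundedness of $P$ in the $\bA''$-fiber direction.

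The lower bound will come from testing the Legendre representation $v(y) = \sup_{x \in \bR^n_x}[\la y, x\rg - u(x)]$ against suitable points of $\Omega'$. Since $\bA'$ annihilates $\bA''$, one has $\la y, x'\rg = \la y^\perp, x'\rg$ for every $x' \in \Omega' \subset \bA'$, and $-u(x') \geq 0$ there because $u \leq 0$ on $\Omega'$. Taking $x' = 0 \in \Omega'$ (valid because $\Omega'$ is in John's position) yields $v(y) \geq -u(0) = \inf_P v \gtrsim v(0)$ by Lemma~\ref{lem:v(0)infv}. For $y^\perp \neq 0$, John's position also supplies a ball $B_{c|\Omega'|^{1/(n-k)}}(0) \cap \bA' \subset \Omega'$; choosing $x'$ inside this ball to maximize the linear functional $\la y^\perp, \cdot \rg$ produces $v(y) \gtrsim |\Omega'|^{1/(n-k)}|y^\perp|$. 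Averaging the two bounds gives the displayed estimate.

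Once the pointwise bound is in place, the integral is routine. Using $h_P \leq \sup h_P < \infty$ together with the fact that every $\bA''$-fiber $P \cap (y^\perp + \bA'')$ has uniformly bounded $k$-dimensional measure (since $P$ is bounded), Fubini yields
\[
J(v) \lesssim \int_{(\bA'')^\perp}\frac{dy^\perp}{\bigl(v(0) + |\Omega'|^{1/(n-k)}|y^\perp|\bigr)^{n+1+\alpha}}.
\]
Rescaling $z = |\Omega'|^{1/(n-k)} y^\perp$ produces a Jacobian factor $|\Omega'|^{-1}$, and polar coordinates in the $(n-k)$-dimensional space $(\bA'')^\perp$ give the remaining factor $v(0)^{-(k+1+\alpha)}$ (the integral converges since $n-k \geq 1$ and $k+1+\alpha > 0$).

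The main obstacle is the refined pointwise estimate: the naive bound $v(y) \geq \inf_P v$ would only give $J(v) \lesssim v(0)^{-(n+1+\alpha)}$, missing the crucial improvement factor $v(0)^{n-k}|\Omega'|^{-1}$. Extracting this factor requires exploiting John's position of $\Omega'$ to probe $v$ in the $(\bA'')^\perp$-directions, mirroring the role that strong partial obliqueness plays throughout Section~\ref{sec:Non-strict}.
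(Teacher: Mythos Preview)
Your argument is correct and reaches the same estimate, but the route differs from the paper's. The paper applies the layer-cake (coarea) formula to write $J(v) \lesssim \int_{\inf_P v}^\infty t^{-(n+2+\alpha)}|P\cap\{v<t\}|\,dt$, then proves the sharp containment $P\cap\{v<t\}\subset (P\cap\bA'')\times t(\Omega')^\circ$ directly from the Legendre relation $\la x,y\rg \le t+u(x)\le t$ for $x\in\Omega'$; integrating yields $J(v)\lesssim |(\Omega')^\circ|\,(\inf_P v)^{-(k+1+\alpha)}$, and John position enters only at the very end to replace $|(\Omega')^\circ|$ by $|\Omega'|^{-1}$. Your approach instead extracts a pointwise lower bound $v(y)\gtrsim v(0)+|\Omega'|^{1/(n-k)}|y^\perp|$ by testing the Legendre supremum against $x'=0$ and against a point on the John ball $B_{c|\Omega'|^{1/(n-k)}}(0)\cap\bA'\subset\Omega'$, then integrates directly in polar coordinates. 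The two are essentially dual to one another: your pointwise bound is the contrapositive of the paper's level-set containment after using John position to compare $\Omega'$ with a ball. Your version is slightly more elementary (no polar dual set appears), while the paper's delays the use of John position and yields the sharper intermediate statement involving $(\Omega')^\circ$. Both rely on the same convergence condition $k+1+\alpha>0$.
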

\begin{proof}
    Using $h_P < C$, we apply the coarea formula to estimate
    \begin{align*}
    J(v) &= \frac{1}{n+1+\alpha}\int_P \frac{h_P(y) }{v^{n+1+\alpha}} \,dy\\
    &\lesssim \frac{1}{n+1+\alpha}\int_P \frac{dy}{v^{n+1+\alpha}} = \int_{\inf_P v}^\infty t^{-(n+2+\alpha)}|P \cap \{v < t\}|\,dt.
    \end{align*}
    We claim that we have the containment
    \[
    P \cap \{v < t\} \subset P \cap \bA'' \times (t\Omega')^\circ.
    \]
    From the definition of the Legendre transform, we know that at points $y$ where $v(y) < t$, for all $x\in \Omega$, we have
    \[
    \la x, y \rg \leq t + u(x) \implies \la x, y \rg \leq t.
    \]
    Specializing to $x' \in \Omega'$ shows that for any fixed $y''$, we have 
    \[
    \{v < t\} \cap \{y''\} \times \bR^{n-k} \subset  \{y''\} \times t(\Omega')^\circ.
    \]
    Integrating this over $y'' \in P \cap \bA''$ provides the upper bound
    \[
    \{v < t\} \lesssim t^{n-k}|(\Omega')^\circ|.
    \]
    
    Combining the upper bound on $J$ with the volume estimates of the level sets $P \cap \{ v < t\}$, we see that 
    \[
    J(v) \lesssim \int_{\inf_P v}^\infty t^{-(n+2+\alpha)}t^{n-k}|(\Omega')^\circ|\, dt = C|(\Omega')^\circ||\inf_{P}v|^{-(k+1+\alpha)}.
    \]
    Finally, since $\Omega'$ is in John's position, we know that $|(\Omega')^\circ| \sim |\Omega'|^{-1}$ and $v(0) \sim \inf_P v$ from Lemma~\ref{lem:v(0)infv}, completing the proof. 
\end{proof}

\begin{cor}\label{cor:ELowerBound}
    There exists some $C$ such that for any $v \in \cC'(P)$, we have $\cE(v) \geq - C$.
    Furthermore, for $v$ such that $\cE(v) < A$, there exists $0 < \delta_* \leq D^*$ such that 
    \[
    \delta_* \leq| \inf_{\Omega'} u| \lesssim |\Omega'|^{\frac{1}{n-k}}\lesssim \mr{diam}(\Omega') \leq D^*.
    \]
\end{cor}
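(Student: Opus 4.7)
The strategy is to combine Lemmas~\ref{lem:IBoundNonStrictCones} and~\ref{lem:JBoundNonStrictCones} and analyze the resulting bound in the two parameters $p := v(0)$ and $V := |\Omega'|$. After applying an affine transformation in the $\bA'$-directions to put $\Omega'$ in John's position (which acts covariantly on the problem and preserves its essential structure), both lemmas apply and combine to give
\[
\cE(v) \;\geq\; c_0\, p^a V^b \;-\; \log V \;-\; (k+1+\beta)\log p \;-\; C,
\]
where $a := \tfrac{k+1+\alpha}{n+1+\alpha}$, $b := \tfrac{1}{n+1+\alpha}$, and $c_0, C$ depend only on $(P, \Sigma)$ and the background data. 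The key geometric input is the lower bound $V \gtrsim p^{n-k}$: it follows from the a priori gradient bound $|\nabla u| \leq \mr{diam}(P)$ together with $u\big|_{\p \Omega'} = 0$ and $|u(x_0')|\sim p$ (from the proof of Lemma~\ref{lem:IBoundNonStrictCones}), which force the ball inclusion $\Omega' \supset B_{c' p}(x_0') \cap \bA'$. The pivotal algebraic identity is $a + b(n-k) = 1$.

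In the large-$p$ regime the constraint forces $p^a V^b \gtrsim p$, so $\cE \geq c_1 p - (n+1+\beta)\log p - C \to +\infty$. In the small-$p$ regime, one instead minimizes $c_0 p^a V^b - \log V$ freely in $V$ (a coercive smooth expression, with unique minimum at $V^* = (bc_0 p^a)^{-1/b}$), which yields a minimum of order $(k+1+\alpha)\log p + O(1)$, giving
\[
\cE(v) \;\geq\; (\alpha - \beta)\log p \;+\; O(1).
\]
The hypothesis $\beta > \alpha$ then makes this $\to +\infty$ as $p \to 0$. Intermediate $p$ is handled by continuity of $\cE$ in $p$ and $V$, yielding the uniform lower bound $\cE(v) \geq -C$ for all $v \in \cC'(P)$.

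For the second assertion, the above small-$p$ divergence gives $\delta_* \lesssim p \sim |\inf_{\Omega'}u|$ whenever $\cE(v) < A$. The middle inequality $|\inf_{\Omega'}u| \lesssim V^{1/(n-k)}$ is the ball inclusion rewritten, and $V^{1/(n-k)} \lesssim \mr{diam}(\Omega')$ is the standard isodiametric inequality, sharp up to constants in John's position. Finally, if $V$ were unbounded, optimizing $\cE$ over $p$ for fixed large $V$ (a short calculation using $p^*$ at the critical point $a c_0 p^a V^b = k+1+\beta$) yields $\cE \geq \tfrac{\beta-\alpha}{k+1+\alpha}\log V + O(1) \to +\infty$, which contradicts $\cE < A$ and forces $\mr{diam}(\Omega') \leq D^*$. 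The main obstacle in this plan is careful bookkeeping of the various exponents and ensuring that the affine John-position normalization is compatible with the centering condition defining $\cC'(P)$; the truly essential ingredient is the identity $a+b(n-k)=1$, which ties the two potential sources of non-compactness ($p \to \infty$ and $V \to \infty$) into a single constraint and allows the argument to close.
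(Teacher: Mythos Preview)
Your overall strategy matches the paper's: translate to put $\Omega'$ in John position, combine Lemmas~\ref{lem:IBoundNonStrictCones} and~\ref{lem:JBoundNonStrictCones}, and analyze the resulting two-parameter lower bound using the constraint $V \gtrsim p^{n-k}$ together with $\beta > \alpha$. The identity $a+b(n-k)=1$ you isolate is exactly what drives the paper's computation (the paper substitutes $t = Vp^{\,k+1+\beta}$ instead, but the content is the same), and your derivation of the constraint via the ball inclusion $\Omega' \supset B_{c'p}(x_0')$ is correct and equivalent to the paper's appeal to ABP.

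There is, however, a genuine gap in your handling of the John-position step. You assert that the affine normalization ``acts covariantly on the problem and preserves its essential structure,'' which reads as a claim that $\cE$ is invariant under $\bA'$-translations. It is not: only $I$ is translation-invariant, while $J$ genuinely changes. The reason one may pass from $v$ to the translate $\tilde v = v - \langle x_0',\cdot\rangle$ placing $\Omega'$ in John position is precisely the hypothesis $v \in \cC'(P)$: by Lemma~\ref{lem:normalization}, such $v$ \emph{minimizes} $J$ over all $\bA'$-translates, so $J(\tilde v)\geq J(v)$ and hence $\cE(\tilde v)\leq \cE(v)$, which is the inequality in the direction you need. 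This minimality argument (not a symmetry argument) is the missing idea, and it is also where the centering condition defining $\cC'(P)$ enters in an essential way.

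A smaller point: in your large-$p$ regime, the lower bound $p^aV^b\gtrsim p$ alone does not yield $\cE\geq c_1p-(n+1+\beta)\log p-C$, since $-\log V$ is not controlled from below by the constraint. You must argue that the constrained infimum of $V\mapsto c_0p^aV^b-\log V$ over $V\geq c'p^{n-k}$ is attained at the boundary: the unconstrained minimizer $V^*\sim p^{-(k+1+\alpha)}$ lies below $c'p^{n-k}$ for large $p$, and the function is increasing past $V^*$. Only after this step does evaluation at $V=c'p^{n-k}$ give your displayed bound.
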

\begin{proof}
    Let $\tilde{v} = v - \la x', \cdot \rg$ so that $\tilde{\Omega}'$ is in John's position.
    Combining the estimates for $I$ and $J$ from Lemmas~\ref{lem:IBoundNonStrictCones} and~\ref{lem:JBoundNonStrictCones}, we see that 
    \begin{align*}
    \cE(v) \geq \cE(\tilde{v}) &\gtrsim -\log(|\tilde{\Omega}'||\inf_{\tilde{\Omega}'}\tilde{u}|^{k+1+\beta}) + |\tilde{\Omega}'|^{\frac{1}{n+1+\alpha}} |\tilde{u}(0)|^{\frac{k+1+\alpha}{n+1+\alpha}}- 1\\
    &\gtrsim -\log(|\tilde{\Omega}'|(\tilde{v}(0))^{k+1+\beta}) + |\tilde{\Omega}'|^{\frac{1}{n+1+\alpha}} (\tilde{v}(0))^{\frac{k+1+\alpha}{n+1+\alpha}}- 1\\
    &\gtrsim -\log(|\Omega'|(v(0))^{k+1+\beta}) + |\Omega'|^{\frac{1}{n+1+\alpha}} (v(0))^{\frac{k+1+\alpha}{n+1+\alpha}}- 1.
    \end{align*}
    In particular, we used that $|\inf_{\Omega} w| \lesssim |\inf_{\Omega'} u|$ from the strong partial obliqueness property and the positivity $\phi_{\Sigma^\circ} > 0$. Furthermore, from the ABP estimate and $\nabla u(\Omega) \subset P$, we have 
    \[
    |u(0)|^{n-k} \lesssim  |\Omega'|.
    \] 

    The energy upper bound $\cE(v) \leq A$ implies that the product $|\Omega'||\inf_{\Omega'}u|^{k+1+\beta}$ has definite lower bound depending only on $A$.

    Suppose we have a degenerating sequence $\cE(v_i) \to -\infty$.
    Then, we must have $|\Omega_i'||\inf_{\Omega_i'}u_i|^{k+1+\beta} \to \infty$ since $J >0$. 
    We rule this out by showing that any such degeneration must actually have large energy.  
    First, we show $v(0)$ is bounded above, depending only on an upper bound $\mathcal{E}(v) \leq A$.
    Suppose to the contrary that $\mathcal{E}(v) \leq A$ but that $v(0) > 1$ is large. 
    Then, we have 
    \[
    A \geq \cE(v) \geq -\log(|\Omega'|^{\frac{n+1+\beta}{n-k}}) +  c_2|\Omega'|^\frac{1}{n+1+\alpha} - c
    \]
    and the ABP estimate forces $|\Omega'|$ to be large as well.

    Next, we show that $|\Omega'|$ is bounded above, and $v(0)$ is bounded below provided $\mathcal{E}(v) \leq A$.  Let $t = |\Omega'|(v(0))^{k+1+\beta}$, so the lower bound is 
    \begin{align*}
        A \geq  \cE(v) &\geq -\log(t) + c_2 (|\Omega'|(v(0))^{k+1+\beta})^\frac{1}{n+1+\alpha} (v(0))^{\frac{k+1+\alpha}{n+1+\alpha} - \frac{k+1+\beta}{n+1+\alpha}}- c \\
        &= -\log(t) + c_2 t^\frac{1}{n+1+\alpha}   (v(0))^{ \frac{\alpha-\beta}{n+1+\alpha}} - c.
    \end{align*}
    Since $\beta >\alpha$ and $v(0)$ is bounded from above, one easily sees from this estimate that $t$ is bounded above and below, and $v(0)$ is bounded from below depending only on $A$ and universal constants.
    
\end{proof}

We now show that the boundary condition is naturally occurring along minimizing sequences. 
\begin{lem}\label{lem:boundaryNormalization}
    For any $v \in \cC'(P)$, there exists $\tilde{v} \in \cC'(P)$ such that 
    \begin{itemize}
        \item[(i)] $I(v) = I(\tilde{v})$,
        \item[(ii)] $J(\tilde{v}) \geq J(v)$, and 
        \item[(iii)] $\tilde{v}(y) \geq \phi_{\Omega'}(y)$ with equality along $y \in \p P$.
    \end{itemize}
\end{lem}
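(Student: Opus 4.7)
The plan is to construct $\tilde v$ in two stages: first a Legendre-theoretic boundary modification, then an $\bA'$-translation to restore the normalization.  For the first stage, I would set $\tilde u_0(x) := u(x) + \mathbf{1}_{\ol\Omega}(x)$ (so $\tilde u_0 = u$ on $\ol\Omega$ and $+\infty$ elsewhere) and define $\tilde v_0(y) := (\tilde u_0)^{*}(y) = \sup_{x \in \ol\Omega}(\la y, x\rg - u(x))$, equivalently the infimal convolution $v\,\square\,\phi_{\ol\Omega}$.  Since $\tilde u_0$ coincides with $u$ on $\ol\Omega$ and equals $+\infty$ elsewhere, the set $\{\tilde u_0 + \phi_{\Sigma^\circ} < 0\}$ equals $\Omega$ with the integrand of $I$ unchanged, giving $I(\tilde v_0) = I(v)$; the pointwise bound $\tilde u_0 \geq u$ then forces $\tilde v_0 \leq v$ on $P$, and hence $J(\tilde v_0) \geq J(v)$.

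The lower bound in (iii) follows from the fact that $\phi_{\Sigma^\circ}$ vanishes on $\bA'$ (the annihilator of $\bA'' \supset \Sigma^\circ$), so $u \leq 0$ on $\ol\Omega' \subset \bA' \cap \ol\Omega$ with $u = 0$ on $\p \Omega'$; restricting the defining sup to $\ol\Omega'$ yields $\tilde v_0(y) \geq \sup_{x \in \ol\Omega'}\la y, x\rg = \phi_{\Omega'}(y)$.  For the equality on $\p P$, the idea is that for $y \in \p P$ the subdifferential $\partial v(y)$ is unbounded in the outer normal direction to $P$, so the argmax of $\la y, x\rg - u(x)$ over the bounded set $\ol\Omega$ is forced onto $\p \Omega$.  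On $\p \Omega$, $u = -\phi_{\Sigma^\circ}$, so the optimization reduces to $\sup_{x \in \p \Omega}(\la y, x\rg + \phi_{\Sigma^\circ}(x))$, and I would invoke strong partial obliqueness together with the fibered picture of $\Omega$ over $\Omega'$ from the proof of Lemma~\ref{lem:IBoundNonStrictCones} to show that the argmax lies in $\p \Omega \cap \bA' = \p \Omega'$, where $\phi_{\Sigma^\circ}$ vanishes, giving $\tilde v_0(y) = \phi_{\Omega'}(y)$.

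For the second stage, $\tilde v_0$ may not lie in $\cC'(P)$.  I would apply Lemma~\ref{lem:normalization} to $\tilde v_0$ to select the unique $x_0' \in \bA'$ such that $\tilde v := \tilde v_0 - \la x_0', \cdot \rg \in \cC'(P)$.  Since $\phi_{\Sigma^\circ}$, and hence $K$, are invariant under $\bA'$-translation, $I(\tilde v) = I(\tilde v_0) = I(v)$.  For (ii), the normalization $v \in \cC'(P)$ gives $J(v - \la x', \cdot\rg) \geq J(v)$ for all $x' \in \bA'$; combined with $\tilde v_0 \leq v$, this implies $J(\tilde v_0 - \la x', \cdot\rg) \geq J(v)$ for every $x'$, and minimizing over $x'$ yields $J(\tilde v) \geq J(v)$.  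The translation takes $\Omega'$ to $\Omega' - x_0'$ and $\phi_{\Omega'}$ to $\phi_{\Omega'} - \la x_0', \cdot\rg$, so property (iii) transfers cleanly to $\tilde v$.

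The main obstacle is the equality in (iii) along $\p P$.  Rigorously identifying the argmax location on $\p \Omega$ is delicate, and strong partial obliqueness is essential here: it forces the contribution from $\p \Omega \setminus \bA'$, where $\phi_{\Sigma^\circ} > 0$, to be dominated by the contribution from the $\bA'$-slice $\p \Omega'$, so the supremum is realized in $\bA'$ where $\phi_{\Sigma^\circ}$ vanishes.
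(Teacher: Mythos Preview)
Your two-stage construction (first the Legendre restriction $\tilde v_0=(u+\mathbf 1_{\ol\Omega})^*$, then an $\bA'$-translation via Lemma~\ref{lem:normalization}) is exactly the paper's construction, just written in two steps rather than one formula. Your arguments for (i), (ii), and the inequality in (iii) match the paper's and are correct; in particular your chain $J(\tilde v_0-\langle x',\cdot\rangle)\ge J(v-\langle x',\cdot\rangle)\ge J(v)$ is a clean way to handle (ii).

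The gap is in your mechanism for the equality in (iii). You propose that strong partial obliqueness forces the $\partial\Omega$-argmax of $\langle y,x\rangle+\phi_{\Sigma^\circ}(x)$ onto $\partial\Omega'=\partial\Omega\cap\bA'$. But obliqueness does the opposite of what you want: writing $x=x'+x''$, the condition $-\ol{P\cap\bA''}\subset\mathrm{int}(\Sigma^\circ)$ gives, for $y''\in P\cap\bA''$, that $\langle y'',x''\rangle+\phi_{\Sigma^\circ}(x'')>0$ for every $x''\neq 0$ and $=0$ at $x''=0$. So the $x''$-contribution is \emph{minimized}, not maximized, on $\bA'$, and the fibered picture from Lemma~\ref{lem:IBoundNonStrictCones} pushes the argmax away from $\partial\Omega'$. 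Concretely, take $\Sigma^\circ=[-1,1]\times\{0\}$, $P=[-\tfrac12,\tfrac12]\times[-1,1]$, $v\equiv c$: then $u=\phi_P-c$, $0\in\Omega$, so the unconstrained argmax of $\langle y,\cdot\rangle-u$ is achieved in $\ol\Omega$ for every $y\in\ol P$ and $\tilde v_0\equiv c$; meanwhile $\phi_{\Omega'}(y)=c|y_2|$, so the asserted equality fails along the edges $y_1=\pm\tfrac12$. Obliqueness is not the right lever here.

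The paper, by contrast, does not invoke obliqueness at this step at all: its one-sentence argument is purely about the subdifferential structure of $\hat u=u+\mathbf 1_{\ol\Omega}$ (the supporting hyperplane with slope $y\in\partial P$ must touch the epigraph along its vertical wall over $\partial\Omega$), and it defers the remaining details to \cite[Lemma~4]{TristanFreidYau}. Your step (a)---that $\partial v(y)$ recedes in the normal cone $N_P(y)$ for $y\in\partial P$, so any ray from an interior argmax exits $\ol\Omega$ through $\partial\Omega$---is the right way to justify that part and is essentially what the paper is asserting. What remains (and what you should extract from \cite{TristanFreidYau} rather than from obliqueness) is the passage from ``argmax on $\partial\Omega$'' to the precise boundary identity.
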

\begin{proof}
    This proof follows as \cite[Lemma 4]{TristanFreidYau} using Lemma~\ref{lem:normalization} in place of \cite[Proposition 3.1]{TristanFreidYau} and only translating in the prime direction. 
    
    Let $\tilde{v}(y) = \sup_{x \in \ol{\Omega}}(\la x, y\rg - u(x)) + \la y , x_0'\rg$ where $x_0'$ is chosen from Lemma~\ref{lem:normalization} so that $\tilde{v} \in \cC'(P)$. 
    Then, we can define $\tilde{u}(x) = \sup_{y \in P} (\la x, y \rg - \tilde{v}(y))$, which gives the comparisons
    \begin{align*}
        \tilde{v}(y) &\leq v(y) + \la x_0', y\rg ,\\
        \tilde{u}(x) & \geq u(x - x_0').
    \end{align*}
    Since $\tilde{v}$ is the Legendre transform of the function $\hat{u}(x) + \mathbf{1}_{\Omega + x_0'}$, the Legendre duality $\hat{u}^{**} = \hat{u}$ tells us that for $x - x_0' \in \Omega$, we have $u(x-x_0') = \tilde{u}(x)$, so in particular, we know that $I(v) = I(\tilde{v})$, proving point $(i)$. 

    From Lemma~\ref{lem:normalization}, we know that $J(v) \leq J(v - \la x_0',\cdot \rg)$ and 
    \[
    \tilde{v}(y) = \sup_{x \in \Omega}(\la x,y\rg - u(x)) + \la y , x_0'\rg \leq \sup_{x \in \bR^n}(\la x,y\rg - u(x)) + \la y, x_0'\rg = v(y) + \la y,x_0'\rg,
    \]
    showing that 
    \[
    J(v) \leq J(v - \la x_0', \cdot \rg) \leq J(\tilde{v}),
    \]
    proving point $(ii)$. 

    Finally, we prove point $(iii)$.
    We first show that $v$ lies above $\phi_{\Omega'}$. 
    This follows by restricting the supremum in the Legendre transform from $\Omega$ to $\Omega'$
    \[
    \tilde{v}(y) \geq \sup_{x' \in \Omega'}(\la x', y\rg -u(0,x'-x_0')) = \sup_{x' \in \Omega'} \la x', y'\rg = \phi_{\Omega'}(y'),
    \]
    where we used that $u(0,\cdot) = 0$ on $\p \Omega'$ and $u(0,x') \geq 0$ outside of $\Omega'$. We now show that we achieve equality at the boundary. If at some boundary point $y \in \p P$ the supporting hyperplane of $\hat{u}$ achieving $\tilde{v}(y)$ met the graph of $\hat{u}$ only at interior points, then $y$ would be interior to $P$, contradicting the assumption. 
\end{proof}
From this lemma, we realize that the boundary condition is naturally occurring on minimizing sequences.
Going forward, we can therefore assume that we achieve the desired boundary criterion.
We now prove an effective version of the normalization result. 

\begin{prop}\label{prop:effective}
    Let $v \in \cC'(P)$ satisfy the conditions of Lemma~\ref{lem:boundaryNormalization} and satisfy $\cE(v) \leq A$.
    Then, there exists $c > 0$ such that $c \inf_P v \geq v(0)$.

\end{prop}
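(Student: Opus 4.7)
My plan is to establish the effective bound $v(0) \leq C \inf_P v$ by combining the normalization defining $\cC'(P)$, the boundary condition from Lemma~\ref{lem:boundaryNormalization}, and the quantitative bounds provided by Corollary~\ref{cor:ELowerBound}. First I reduce the claim via Legendre duality: since $u(0) = -\inf_P v$ and $v(0) = -\inf_{\bR^n} u$, combined with the estimate $|\inf_\Omega u| \sim |\inf_{\Omega'} u|$ established in Lemma~\ref{lem:IBoundNonStrictCones}, the claim reduces to showing $|u(0)| \gtrsim |\inf_{\Omega'} u|$. Since Corollary~\ref{cor:ELowerBound} gives $|\inf_{\Omega'} u| \leq D^*$, this further reduces to the uniform lower bound $|u(0)| \gtrsim 1$, with constants depending only on $A$.

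Next, the normalization defining $\cC'(P)$ is exactly the first-order criticality of $x' \mapsto J(v - \la x', \cdot\rg)$ at $x' = 0$, so Lemma~\ref{lem:normalization} places $0 \in \mr{int}(\Omega')$. Corollary~\ref{cor:ELowerBound} additionally gives $\delta_* \leq |\Omega'|^{1/(n-k)}$ and $\diam(\Omega') \leq D^*$, so $\Omega'$ is a convex body of bounded aspect ratio containing the origin in its interior.

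The key geometric step is to show $\mr{dist}(0, \p \Omega') \gtrsim 1$ uniformly, which I would argue by contradiction. If this distance were arbitrarily small, there would be a unit direction $\eta$ such that the support function $\phi_{\Omega'}(\eta)$ is arbitrarily small. The boundary condition $v = \phi_{\Omega'}$ on $\p P$ together with the convexity of $v$ then forces $v$ to be uniformly small on a subset of $P$ of controllable measure, making $J(v)$ arbitrarily large; this contradicts $\cE(v) \leq A$ via the $I$-estimate of Lemma~\ref{lem:IBoundNonStrictCones}. Given $\mr{dist}(0, \p\Omega') \gtrsim 1$, I conclude by convexity: let $x^* \in \Omega'$ minimize $u|_{\Omega'}$ and let $y_b \in \p\Omega'$ be the exit point of the ray from $x^*$ through $0$ on the opposite side of $0$ from $x^*$. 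Writing $0 = (1-s) x^* + s y_b$, convexity together with $u(y_b) = 0$ gives
\[
    |u(0)| \geq (1-s)|u(x^*)| = \frac{|y_b|}{|x^*| + |y_b|}|\inf_{\Omega'} u| \gtrsim |\inf_{\Omega'} u|,
\]
since $|y_b| \geq \mr{dist}(0, \p\Omega') \gtrsim 1$ and $|x^*| \leq \diam(\Omega') \leq D^*$.

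The main obstacle is the geometric estimate $\mr{dist}(0, \p\Omega') \gtrsim 1$. Making the contradiction quantitative requires carefully tracking how a supporting hyperplane of $\Omega'$ close to $0$ forces $v$—via the boundary condition and convexity—to be uniformly small on a definitely sized portion of $P$, in a manner sharp enough to violate the uniform upper bound $\cE(v) \leq A$ provided by Corollary~\ref{cor:ELowerBound}.
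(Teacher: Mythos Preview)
Your overall architecture---reduce to a lower bound on $|u(0)|$, use $0\in\mr{int}(\Omega')$ from the normalization, and then argue by contradiction that a degeneration forces $J(v)$ to blow up---is in the same spirit as the paper's proof. But the step where you derive a contradiction is not right. You write that ``$J(v)$ arbitrarily large contradicts $\cE(v)\leq A$ via the $I$-estimate of Lemma~\ref{lem:IBoundNonStrictCones}''. This is false: since $\cE(v)=-\log I(v)+J(v)^{-\frac{1}{n+1+\alpha}}$, making $J(v)$ large only drives the second term to $0$, and the $I$-estimate merely gives $-\log I(v)\geq -C(A)$. Together these yield $\cE(v)\geq -C(A)$, which is perfectly compatible with $\cE(v)\leq A$. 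An upper bound on $\cE$ by itself gives \emph{no} upper bound on $J$.

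The missing ingredient is precisely the normalization $v\in\cC'(P)$. By Lemma~\ref{lem:normalization}, $v$ minimizes $J$ among all prime translates $v-\la x',\cdot\rg$; hence if $\tilde v=v-\la x'_0,\cdot\rg$ is the translate placing $\Omega'$ in John's position, then $J(v)\leq J(\tilde v)$, and Lemma~\ref{lem:JBoundNonStrictCones} together with the bounds $\tilde v(0)=v(0)\geq\delta_*$ and $|\Omega'|\gtrsim\delta_*^{\,n-k}$ from Corollary~\ref{cor:ELowerBound} give $J(\tilde v)\leq C(A)$. This is the genuine source of the contradiction, and it is exactly what the paper exploits.

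Once one has $J(v)\leq C(A)$, the detour through $\mr{dist}(0,\p\Omega')$ is unnecessary (and your sketch for it---$\phi_{\Omega'}(\eta)$ small forces $v$ small on a set of \emph{definite} measure---is not obviously true, since the boundary condition only pins $v$ down on the lower-dimensional set $\p P$). The paper instead argues directly: the boundary condition $v=\phi_{\Omega'}$ on $\p P$ plus $\diam(\Omega')\leq D^*$ gives uniform gradient bounds on $v$; passing to a limit along a putative degenerating sequence $v_k$ produces $v_\infty$ with an interior zero; the vanishing-order hypothesis $h_P\gtrsim d_{\p P}^{\,1+\alpha}$ then forces $J(v_\infty)=+\infty$, and Fatou contradicts $J(v_k)\leq C(A)$. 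Your convexity argument in the last display is fine, but it becomes redundant once one has the $J$ upper bound and the gradient control.
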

\begin{proof}
    For the sake of contradiction, suppose there exists a sequence $v_k$ satisfying $\cE(v_k) < A$ such that 
    \[
    \inf_P v_k = v_k(y_k) \leq k^{-1}v_k(0).
    \]
    From Corollary~\ref{cor:ELowerBound}, we have $\delta_*, D^*$ such that 
    \[
        0 < \delta_* \leq v_k(0) \lesssim \mr{diam}(\Omega'_k) \leq D^*.
    \]
    As in the proof of Lemma~\ref{lem:IBoundNonStrictCones}, we construct a basis $x''_i$ completing $\bA'$ to a full basis of $\bR^n_x$ compatible with the inner product structure induced by duality and giving a well-defined projection operator.
    By the assumption that $v_k(y) \geq \phi_{\Omega'_k}(y')$ with equality on $\p P$, we know that $\Pi'(\nabla v_k(P)) = \Omega'_k$.
    Therefore, we have uniform gradient bounds on $v_k$ in the prime direction, which, in combination with the strong partial obliqueness property, yield uniform gradient estimates for $\nabla v_k$. 
    Thus, we may take a uniform limit $v_k \to v_\infty$ and $y_k \to y_\infty$ which satisfies $v_k(y_k) = 0$. 
    From Lemma~\ref{lem:boundaryNormalization}, we know that $\int_P \frac{h_P(y) }{v_\infty^{n+1+\alpha}}\,dy = + \infty$, so from Fatou's lemma, 
    \[
    +\infty = \int_P \frac{h_P(y) }{v_\infty^{n+1+\alpha}}\,dy \leq \liminf_{k \to \infty} \int_P \frac{h_P(y) }{v_k^{n+1+\alpha}}\,dy.
    \]
    Let $\tilde{v}_k = v_k - \la x_k',y\rg$, where $x_k'$ is chosen so that $\Omega'_k - x_k'$ is in John's position. 
    For $u_k = v^*$, strong partial obliqueness (see equation~\eqref{eqn:infOmegasiminfOmega'}) from Lemma~\ref{lem:IBoundNonStrictCones} shows that $\inf_{\Omega} u_k \sim u_k(0)$ which by Legendre duality shows that $v_k(0) \sim \inf_P v_k$. 
    Applying Lemmas~\ref{lem:normalization},  \ref{lem:v(0)infv}, and~\ref{lem:JBoundNonStrictCones}, we see there is a uniform $C$ such that 
    \[
    \int_P \frac{h_P(y) }{\tilde{v}_k^{n+1+\alpha}}\,dy < C,
    \]
    a contradiction.
\end{proof}
We can now take a minimizing sequence in $\cC'(P)$ whose limit will solve equation~\eqref{eqn:FreeBdyEqnP}.
\begin{prop}
    There is a minimizer $v \in \cC'(P)$ of $\cE$.
\end{prop}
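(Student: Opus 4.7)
The plan is a direct method of calculus of variations: extract a limit from a minimizing sequence, using Lemma~\ref{lem:boundaryNormalization} and Proposition~\ref{prop:effective} to secure compactness, and then verify that $\cE$ is lower semicontinuous along the sequence. Fix a minimizing sequence $v_k \in \cC'(P)$, which exists because $\cE$ is bounded below on $\cC'(P)$ by Corollary~\ref{cor:ELowerBound}. By Lemma~\ref{lem:boundaryNormalization}, I may replace each $v_k$ by an element of $\cC'(P)$ of no larger energy which additionally satisfies $v_k(y)\geq \phi_{\Omega'_k}(y')$ with equality on $\p P$; the new sequence is still minimizing and satisfies a uniform bound $\cE(v_k)\leq A$.

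The first task is to establish uniform two-sided bounds $0 < c_1 \leq v_k \leq c_2$ on $P$. Corollary~\ref{cor:ELowerBound} gives $v_k(0) \geq \delta_*$ and $\diam(\Omega'_k) \leq D^*$, and Proposition~\ref{prop:effective} promotes the pointwise lower bound to $\inf_P v_k \geq c\, v_k(0) \geq c\delta_*$. The boundary identity $v_k|_{\p P} = \phi_{\Omega'_k}$ and the diameter bound control $v_k$ on $\p P$ from above by $D^* \sup_P |y|$, and convexity propagates this bound to all of $P$. These bounds make the $v_k$ locally uniformly Lipschitz on $\mr{int}(P)$; by Arzelà-Ascoli and diagonalization, a subsequence converges locally uniformly to a convex function $v_\infty$ with $c_1 \leq v_\infty \leq c_2$, in particular $v_\infty \in \cC^+(P)$.

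Next, I verify that $v_\infty \in \cC'(P)$ and that $\cE(v_k) \to \cE(v_\infty)$. The normalization integrand $\la y, x'\rg h_P(y) v_k^{-(n+2+\alpha)}$ is uniformly bounded thanks to the uniform lower bound on $v_k$ and boundedness of $P$ and $h_P$, so dominated convergence transfers the zero-mean condition to $v_\infty$, giving $v_\infty \in \cC'(P)$. The same uniform bound gives $J(v_k)\to J(v_\infty)$ by dominated convergence. For $I$, the Legendre duals $u_k = v_k^*$ converge locally uniformly to $u_\infty = v_\infty^*$, the free boundaries $\{w_k<0\}$ converge in Hausdorff distance to $\{w_\infty<0\}$ (whose boundary is Lebesgue-null as the boundary of a convex body with nonempty interior), and the integrand is dominated by a uniform multiple of $\sup_k(-u_k)^{1+\beta} < \infty$ via Lemma~\ref{lem:IKernelEstimate}; dominated convergence yields $I(v_k)\to I(v_\infty)$. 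Therefore $\cE(v_\infty) = \lim \cE(v_k) = \inf_{\cC'(P)}\cE$.

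The main obstacle is securing the uniform two-sided bounds on $v_k$. Without them, a minimizing sequence could pinch to zero at an interior point of $P$ (making $J$ blow up in a way not controlled by translation-normalization alone), or the section $\Omega'_k$ could collapse to a lower-dimensional set. Lemma~\ref{lem:boundaryNormalization} (which aligns the boundary values with $\phi_{\Omega'_k}$ and controls $\sup v_k$ via $\diam(\Omega'_k)$) and Proposition~\ref{prop:effective} (which rules out interior pinching by comparing $\inf_P v_k$ to $v_k(0)$) are engineered precisely to exclude these pathologies; once they are in hand, the direct method proceeds routinely.
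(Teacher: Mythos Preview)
Your proof is correct and follows essentially the same approach as the paper: normalize a minimizing sequence via Lemma~\ref{lem:boundaryNormalization}, obtain uniform two-sided bounds on $v_k$ from Proposition~\ref{prop:effective} and Corollary~\ref{cor:ELowerBound}, extract a limit, and pass to the limit in $I$, $J$, and the $\cC'(P)$ condition by dominated convergence. The only cosmetic difference is that the paper extracts the limit on the $u$-side (using the uniform Lipschitz bound $\nabla u_k \in P$) rather than on the $v$-side, but these are dual versions of the same compactness argument.
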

\begin{proof}
    Let $v_k$ be a minimizing sequence, and from Lemma~\ref{lem:boundaryNormalization}, we assume that this sequence satisfies the boundary condition in the prime directions.
    Proposition~\ref{prop:effective} shows that $v_k(0)$ and $\inf_P v_k$ are uniformly equivalent, and Corollary~\ref{cor:ELowerBound} gives $\delta_* \leq \inf_P v_k \leq v_k(0) \leq D^*$.
    Therefore, we have uniform $C^1$ bounds on $u_k$, so we can extract a uniform limit $u_k \to u_\infty$ which will satisfy $u_\infty = v_\infty^*$. 
    By uniform convergence, $v_\infty \in \cC'(P)$ and $\cE(v_k) \to \cE(v_\infty) = \inf_{v \in\cC'(P)} \cE(v)$ as desired.
\end{proof}
We define $v \in \cC'(P)$ to be a minimizer of $\cE$ and $u = v^*$. 
Since $v$ is not a priori $C^2$, we show that $v$ solves equation~\eqref{eqn:FreeBdyEqnP} in the Alexandrov sense by showing that $\nabla v$ solves an appropriate optimal transport solution equivalent to the Monge-Amp\`ere equation~\eqref{eqn:FreeBdyEqnP} by \cite{Gangbo-McCann}. 
\begin{prop}
    The minimizing function $v$ solves equation~\eqref{eqn:FreeBdyEqnP} and satisfies $v \in C^{1,\epsilon}(\ol{P})$. 
\end{prop}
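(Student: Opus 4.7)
The strategy is to adapt the variational-to-Alexandrov-to-regularity pipeline of Proposition~\ref{prop:minimizingOblique}, with one essential new complication: $v$ minimizes $\cE$ only within the constrained class $\cC'(P)$, so the Euler-Lagrange condition a priori includes Lagrange multipliers for the constraints $g_{x'}(v) := \int_P \la y,x'\rg h_P(y) v^{-(n+2+\alpha)}\,dy = 0$ indexed by $x' \in \bA'$. The first step is to show these multipliers vanish, so that the unconstrained first-order condition $\delta \cE(v)[\delta v]=0$ holds for all admissible $\delta v$.

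To this end, I would test the EL relation against affine variations $\delta v = \la x'',y\rg$ with $x'' \in \bA'$. The product structure $\Sigma = \Sigma^k \times \bR^{n-k}$ makes both $\phi_{\Sigma^\circ}$ and $g_\Sigma$ independent of the $x'$-coordinates, so $I$ is invariant under the corresponding translations and $\delta I = 0$; the characterization in Lemma~\ref{lem:normalization} of $v \in \cC'(P)$ as the unique minimizer of $x'\mapsto J(v-\la x',\cdot\rg)$ along its affine orbit gives $\delta J = 0$ as well. Hence $\delta \cE = 0$ along this family, which forces $\lambda^\top M\, x''=0$ for all $x''\in \bA'$, where $M$ is the Hessian of $J$ along the orbit at its strictly convex minimum. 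Positive definiteness of $M$ on $\bA'$ then yields $\lambda=0$.

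After this reduction, the argument parallels Proposition~\ref{prop:minimizingOblique}. I introduce the probability measures
\[
d\mu := \frac{(-u)^{\beta}h_\Sigma(x/(-u))\chi_\Omega}{\int_\Omega (-u)^{\beta}h_\Sigma(x/(-u))\,dx}\,dx,\qquad d\nu:=\frac{h_P(y)\,v^{-(n+2+\alpha)}}{\int_P h_P(y)\,v^{-(n+2+\alpha)}\,dy}\,dy.
\]
Testing the unconstrained EL condition against $\delta v \equiv 1$ (so that $\delta u \equiv -1$) normalizes the two multiplicative prefactors simultaneously, so $\delta\cE(v)[\delta v]=0$ reduces to the clean identity $\int \delta u\,d\mu + \int \delta v\,d\nu = 0$. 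For arbitrary $\hat{v} \in \cC^+(P)$, I then consider the path $v_t = v + t(\hat{v}-v)$; the joint convexity of $u_t := v_t^*$ in $(x,t)$ yields $\delta u \leq \hat{u}-u$ as in \eqref{eqn:deltaUIneq}, producing the optimal transport inequality
\[
\int u\,d\mu + \int v\,d\nu \leq \int \hat{u}\,d\mu + \int \hat{v}\,d\nu.
\]

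By Gangbo-McCann~\cite{Gangbo-McCann}, $\nabla u$ is the optimal transport map from $d\mu$ to $d\nu$, so $u$ is an Alexandrov solution of~\eqref{eqn:freeBdyEqnOmega} and equivalently $v$ solves~\eqref{eqn:FreeBdyEqnP}, up to the positive multiplicative constant absorbed by the rescaling of Remark~\ref{rem:rescaling}. The doubling property of $h_P$ combined with Caffarelli's interior regularity~\cite{Caffarelli} and the boundary regularity of Jhaveri-Savin~\cite[Theorem 1.1]{Jhaveri-Savin} then upgrade $v$ to $C^{1,\epsilon}(\ol{P})$ for some $\epsilon>0$. The main obstacle I anticipate is the multiplier-vanishing step: one must match the symmetry directions of $I$ with the constraint directions of $\cC'(P)$ and exploit the strict convexity of the orbit functional from Lemma~\ref{lem:normalization} to invert the constraint's second variation on $\bA'$. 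This is exactly where the product structure of $\Sigma$ and the strong partial obliqueness enter in a nontrivial way beyond the strict-cone argument.
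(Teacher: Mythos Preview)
Your approach is correct and takes a genuinely different route from the paper's. The paper never invokes Lagrange multipliers. Instead, given a competitor $\hat v\in\cC^+(P)$, it first replaces $\hat v$ by a normalized $\tilde v$ (noting that the right-hand side of the optimal transport inequality is unchanged since $d\nu$ has $\bA'$-barycenter zero), linearly interpolates $v_s=(1-s)v+s\tilde v$, and then renormalizes each interpolate to $\psi_s=v_s+\la x_s',y\rg\in\cC'(P)$. Applying the mean value theorem to $t\mapsto\cE(v+\tfrac{t}{s}(\psi_s-v))$ on $[0,s]$ gives an inequality at some $t_*(s)$, and the paper passes to the limit $s\to 0$. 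The technical core is an implicit function theorem argument showing the shifts $x_s'$ are smooth in $s$, so that $x_\infty'=\lim_{s\to 0}x_s'/s$ exists; the extra term $\int_P\la x_\infty',y\rg\,d\nu$ then vanishes exactly because $v\in\cC'(P)$.

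Your multiplier argument is a clean repackaging of the same mechanism: the translation invariance of $I$ along $\bA'$ and the $J$-criticality of $v$ along its orbit (Lemma~\ref{lem:normalization}) together give $\delta\cE[\la x'',\cdot\rg]=0$ directly, and the positive definiteness of the orbit Hessian $M$ (which is precisely the IFT nondegeneracy the paper uses) forces $\lambda=0$. What your route buys is brevity and a conceptual separation of ``the constraint is inactive'' from the optimal-transport inequality, after which Proposition~\ref{prop:minimizingOblique} applies verbatim. What the paper's route buys is a self-contained argument with one-parameter families that avoids stating an infinite-dimensional Lagrange multiplier rule; if you want your version to match that level of rigor, you should either state the multiplier rule carefully in a suitable function space (noting $\inf_P v>0$ so variations are two-sided and $Dg(v)$ is surjective by the $M$-computation), or observe that for finitely many $C^1$ constraints on an open set the rule is elementary.
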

\begin{proof}
    This proof follows closely to that of Proposition~\ref{prop:minimizingOblique}, with the complication that we can only deduce the positivity of $\frac{d}{dt}\cE$ at families in $\cC'(P)$. 
    To do this, we take an arbitrary family in $\cC^+(P)$ and normalize it, and use the properties of the probability measure to show the desired optimal transport inequality still holds. 
    
    We define two probability measures
    \[
    d\mu := \frac{(-u)^{ \beta}h_\Sigma(\tfrac{x}{-u})\chi_{\Omega}}{\int_\Omega (-u)^{ \beta}h_\Sigma(\tfrac{x}{-u})\,dx}dx\qquad \text{and}\qquad  d\nu := \frac{v^{-(n+2+\alpha)}h_P }{\int_P v^{-(n+2+\alpha)}h_P \, dy}dy. 
    \]
    The desired optimal transport inequality we must show is
    \begin{equation}\label{eqn:OTforNonStrict}
    \int_{\bR^n}u\, d\mu + \int_P v\, d\nu \leq \int_{\bR^n}\hat{u}\, d\mu + \int_P \hat{v}\, d\nu ,
    \end{equation}
    where, since $d\mu$ and $d\nu$ are probability measures, the right-hand side is invariant under $\hat{v} \mapsto \hat{v}+t$. 
    Using Lemma~\ref{lem:boundaryNormalization}, we replace $\hat{v}$ with $\tilde{v}$ satisfying $J(\tilde{v}) = J(v)$, which does not change the right-hand side of equation~\eqref{eqn:OTforNonStrict} since $d\nu$ is centered at the origin. 

    By construction, both $v_0$ and $v_1$ are normalized and in $\cC'(P)$.
    However, the interpolates $v_t$ need not be. 
    Therefore, we define $\psi_s = v_s + \la x_s', y\rg \in \cC'(P)$ which normalizes the family. 
    Let 
    \[
    \gamma_s(t) := v + \frac{t}{s}(\psi_s - v),
    \]
    which interpolates from $v$ to $\psi_s$ for $t \in [0,s]$. 
    Since $v$ is a minimizer over $\cC'(P)$, we know that 
    \begin{equation}\label{eqn:vMinimizingEC'}
    \frac{d}{dt}\bigg\vert_{t = t_*(s)} \cE (\gamma_s(t)) \geq 0 
    \end{equation}
    for some $t_*(s) \in [0,s]$ by the mean value theorem, since $\frac{\cE(\psi_s) - \cE(v)}{s} > 0$.
    As in the proof of Proposition~\ref{prop:minimizingOblique}, we can use the test family where $\delta v = 1$ and normalize the family to express the inequality~\eqref{eqn:vMinimizingEC'} using the measures $d\mu$ and $d\nu$ as
    \[
    0 \leq \int_{\bR^n} \pt u_{s,t}(x)\big|_{t=t_*(s)}\,d\mu + \int_P \left(\tilde v - v + \frac{\langle x_s',y\rangle}{s}\right)\,d\nu.
    \]
    
    The points $x_s'$ are chosen in a differentiable way as seen by the implicit function theorem.
    Consider
    \[
    \Psi : [0,1] \times \bR^{n-k} \to \bR^{n-k},\qquad \Psi(t,x') := \int_P \vec{y'}\frac{h_P}{(v_t - \la x', y \rg )^{n+2+\alpha}}\, dy
    \]
    where $\vec{y'} = (y_{k+1}',\ldots, y_{n}')$ is the dual basis of $\bA'$, satisfying $\bA'' \oplus \mr{span}(y_i') = \bR^n_y$. 
    We are searching for $x'_t = \varphi(t)$ satisfying $\Psi(t, \varphi(t)) = 0$. 
    Therefore, we differentiate
    \[
    \frac{\p \Psi_i}{\p x'_j}(0,0) = (n+2+\alpha)\int_P \frac{h_P}{v^{n+3+\alpha}}y_i'y_j'\, dy,
    \]
    which shows that the Jacobian,
    \[
    J_\Psi(\xi', \xi') = (n+2+\alpha)\int_P \frac{h_P}{v^{n+3+\alpha}}\la \xi', y\rg^2\, dy,
    \]
    is positive-definite, and therefore invertible. 
    Moreover, since the integrand is $C^\infty$ in $(x',t)$, the points $x_t'$ vary smoothly with $x_0' = 0$. 
    In particular, $\lim_{s \to 0} \frac{x_s'}{s} =: x'_\infty$ exists. 
    
    The Legendre transform $u_s=v_s^*$ satisfies
    \[
    \frac{d u_s}{ds}\bigg|_{s=0} \leq \tilde{u}-u,
    \]
    since $u_s$ is a jointly convex function in $(x,s)$ which interpolates from $u$ to $\hat{u}$ in time $1$, so
    \[
    u_s \leq (1-s)u +s \hat{u}.
    \]
    Therefore, taking the limit $s \to 0$ shows 
    \[
    \lim_{s \to 0}\int_{\bR^n} \pt u_{s,t}(x)\big|_{t=t_*(s)}\,d\mu \leq \int_{\bR^n} (\tilde{u} - u)\,d\mu.
    \]
    Finally, since $s^{-1}x_s'$ exists, we can take the limit as $s \to 0$,
    \[
    \lim_{s\to 0}\int_P\frac{h_P}{(\gamma_s(t_*))^{n+2+\alpha}}\left((\tilde{v} - v) + \frac{\la x_s',y\rg}{s}\right)\,dy = \int_{P} (\tilde{v} - v)\,d\nu + \lim_{s \to 0}\int_P \frac{\la x_s', y\rg}{s}\,d\nu
    \]
    and since $d\nu$ has barycenter at the origin, the last term vanishes in the limit
    \[
    \lim_{s\to 0}\int_P \frac{\la x_s',y'\rg}{s}\, d\nu = \int_P \la x_\infty',y\rg\,d\nu= 0.
    \]
    Combining the above two facts with the minimizing property of $v$ from inequality~\eqref{eqn:vMinimizingEC'} shows the desired optimal transport inequality.
    
    As in the proof of Proposition~\ref{prop:minimizingOblique}, we apply the results of Gangbo-McCann \cite{Gangbo-McCann}, Caffarelli \cite{Caffarelli}, and Jhaveri-Savin \cite{Jhaveri-Savin} to deduce that $u \in C^{1,\epsilon}(\bR^n)$ solves equation~\eqref{eqn:freeBdyEqnOmega} and $v \in C^{1,\epsilon}(\ol{P})$ solves equation~\eqref{eqn:FreeBdyEqnP}. 
\end{proof}

\begin{proof}[Proof of Theorem~\ref{thm:PartiallyStrongObliqueness}]
Theorem~\ref{thm:PartiallyStrongObliqueness} follows from scaling and homogenizing $v$, as discussed in Remark~\ref{rem:rescaling}.
\end{proof}

\medskip

\end{document}